\newtheorem{thm}{Theorem}[section]
\newtheorem{prop}[thm]{Proposition}
\newtheorem{conj}[thm]{Conjecture}
\newtheorem{defi}[thm]{Definition}
\newtheorem{lem}[thm]{Lemma}
\theoremstyle{definition}
\newcommand{\x}{\mathbb{X}}
\newcommand{\V}{{\mathcal V}}                    %
\newcommand{\op}[1]{\operatorname{#1}}             % operatorname
\newcommand{\mps}[3]{#1:#2\rightarrow #3}           % mapping
\newcommand{\matop}[3]{\mathop{#1}\limits_{#2}^{#3}} % mat oper.
\newcommand{\pam}[4]{#1\hookleftarrow #2\mathop{\rightarrow}
\limits^{#3}#4}                                 % partial mapping
\newcommand{\cal}[1]{{\mathcal #1}}                % calligraph
\newcommand{\ii}{^{-1}}                          % ^{-1}
\newcommand{\mean}{\rightleftharpoons}          % mean
\newcommand{\ANE}{\operatorname{ANE}}          % ANE
\newcommand{\cov}{\operatorname{cov}}           % open covers
\newcommand{\Id}{\operatorname{Id}}             %
\begin{document}
\title[On Murayama's theorem on extensor properties ...]
{On Murayama's theorem on extensor properties of $G$-spaces of
given orbit types}

\author[Sergei Ageev]{Sergei Ageev}
 \address[S. Ageev]{Department  of Mathematics and Mechanics,
 Belarus State University, Minsk, Independence Avenue, 4, Belarus 200050}
 \email{ageev\_serhei@yahoo.com}

\author[Du\v{s}an Repov\v{s} ]{Du\v{s}an Repov\v{s}}
 \address[D. Repov\v{s}]{Faculty of Mathematics and Physics, and Faculty of
Education, University of Ljubljana, P.O.B. 2964, Ljubljana,
Slovenia 1001}
 \email{dusan.repovs@guest.arnes.si}

%\thanks{}

\subjclass[2010]{54F11, 54H20, 58E60} \keywords{Extension of action,
compact group transformation, equivariant absolute extensor}

\date{\today}
\maketitle

\begin{abstract} We develop a method of extending actions of
compact transformation groups which is then applied to the problem
of preservation of equivariant extensor property by passing to a
subspace of given orbit types.
\end{abstract}

\medskip

 \section{Introduction}
 The problem of topological characterization of simplicial
complexes motivated Borsuk to introduce an  important class of
spaces, namely absolute neighborhood retracts ($\op{ANR}$-spaces)
which turned out to be a wider class than simplicial complexes,
but intimately close to them with respect to other properties.
 As it was shown by Dugundji \cite{Du}, each $\op{ANR}$-space is
characterized by the property that it admits an arbitrarily fine
domination by simplicial complexes.

In equivariant topology the role of simplicial complexes is played
by  $G$-$\op{CW}$-complexes, and the role  of absolute
neighborhood retracts is played by $G$-$\ANE$-spaces. In the
equivariant case Dugundji's characterization mentioned above
consists of
the following plausible statement:
\begin{conj}\label{one141} Let $G$ be a compact group. Then
any metric $G$-space $\Bbb X\in G$-$\ANE$ {\it admits an
arbitrarily fine domination by $G$-$\op{CW}$-complexes}, i.e. for
each cover $\omega\in\cov\Bbb X$ there exist a
$G$-$\op{CW}$-complex $\Bbb Y$ and $G$-maps $\Bbb
X\matop{\rightarrow}{}{f}\Bbb Y\matop{\rightarrow}{}{g}\Bbb X$
such that $g\circ f$ and $\op{Id}_{\Bbb X}$ can be joined by an
$\omega$-$G$-homotopy.
  \end{conj}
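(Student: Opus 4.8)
\medskip\noindent\textbf{Proof proposal.}
The plan is to reproduce Dugundji's proof \cite{Du} of the non-equivariant statement with each of its ingredients performed equivariantly, and to circumvent the failure of the classical ``nerve of a cover'' construction (an infinite compact group cannot act on the discrete index set of a $G$-invariant locally finite cover without collapsing the orbit structure of $\mathbb X$) by slicing $\mathbb X$ into orbit-type strata. For the set-up: fixing a $G$-invariant metric on $\mathbb X$ (obtained by averaging over $G$ with respect to normalized Haar measure), the Kuratowski--Wojdys\l awski embedding may be taken $G$-equivariant and closed, realizing $\mathbb X$, via a $G$-embedding $\iota$, as a closed $G$-subspace of a convex $G$-subset $C$ of a normed linear space $\mathbf L$ on which $G$ acts by linear isometries. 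Since $\mathbb X\in G$-$\ANE$, there are a $G$-invariant open neighbourhood $\mathbb U\supseteq\iota(\mathbb X)$ in $C$ and a $G$-retraction $r\colon\mathbb U\to\iota(\mathbb X)\cong\mathbb X$. Fix $\omega\in\cov\mathbb X$; replacing it by a common refinement of its $G$-translates we may assume $\omega$ is $G$-invariant, and then choose a $G$-invariant open cover $\alpha$ of $\mathbb U$ so fine that: (a) $\{\,r(V)\cap\mathbb X:V\in\alpha\,\}$ refines $\omega$; and (b) any two points lying in a common member of the star cover $\St(\alpha)$ are joined inside $\mathbb U$ by their straight-line segment (possible because $\mathbb U$ is open in the metrizable convex set $C$).

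\medskip\noindent\emph{Orbit-type strata and Dugundji downstairs.}
Order the orbit types occurring in $\mathbb X$ by subconjugacy and put $\mathbb X_{(H)}=\{x:(G_x)=(H)\}$ and $\mathbb X_{\ge(H)}=\bigcup_{(K)\ge(H)}\mathbb X_{(K)}$; the latter is closed in $\mathbb X$. By the slice theorem every point has a tube neighbourhood $G\times_{G_x}S_x$, so on each stratum the orbit map $\mathbb X_{(H)}\to\mathbb X_{(H)}/G$ is a locally trivial bundle with fibre $G/H$ and structure group a quotient of $N(H)$. Murayama's theorem -- the assertion, whose proof via extension of actions is the subject of this paper, that passing to the subspace of a prescribed orbit type preserves the equivariant extensor property -- then gives that each $\mathbb X_{\ge(H)}$ is again a $G$-$\ANE$ and that each orbit space $\mathbb X_{(H)}/G$ (as well as the fixed-point set $\mathbb X^G$) is an $\op{ANR}$. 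Applying the classical Dugundji theorem \cite{Du} to these orbit spaces produces $\op{CW}$-dominations $\mathbb X_{(H)}/G\to L_{(H)}\to\mathbb X_{(H)}/G$ finely controlled by the image of $\alpha$, which one lifts along the orbit bundle to $G$-$\op{CW}$-models $\mathbb Y_{(H)}$ of the individual strata.

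\medskip\noindent\emph{Assembling $\mathbb Y$ -- the main obstacle.}
Now induct over a well-ordering of the orbit types compatible with subconjugacy, starting from the largest orbit type present (the fixed-point set $\mathbb X^G$, dominated by an ordinary $\op{CW}$-complex carrying the trivial $G$-action). At the stage of an orbit type $(H)$ one already has an equivariant $\omega$-domination of $\mathbb X_{>(H)}$ by a $G$-$\op{CW}$-complex $\mathbb Y_{>(H)}$, and one must extend it across the stratum $\mathbb X_{(H)}$ by attaching $\mathbb Y_{(H)}$ together with its two $G$-maps, compatibly along the ``link'' where the cells modelling orbit type $(H)$ meet those of higher type. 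This is precisely a problem of \emph{extending a compact-group action, and an equivariant map, across a stratum}, and it is here that the extension method announced in the abstract is meant to enter: the equivariant neighbourhood retractions supplied by $G$-$\ANE$ produce $G$-invariant collars of $\mathbb X_{\ge(H)}$, along which the partially built $G$-structure and $G$-maps are pushed off and extended. I expect this gluing -- together with the bookkeeping needed to keep all the straight-line homotopies simultaneously $\omega$-small across the (possibly transfinitely many, and not locally finitely indexed) strata, which may force a preliminary refinement of $\alpha$ subordinate to the stratification -- to be the crux of the proof. Granting it, the union $\mathbb Y$ of all stages is the required $G$-$\op{CW}$-complex, with $f\colon\mathbb X\to\mathbb Y$ assembled stratumwise from invariant partitions of unity (the equivariant analogue of Dugundji's canonical map), $g=r\circ\varphi\colon\mathbb Y\to\mathbb X$ for an equivariant realization $\varphi$ of $\mathbb Y$ inside $\mathbb U$, and the homotopy $(x,t)\mapsto r\big((1-t)\,\iota(x)+t\,\varphi(f(x))\big)$ -- equivariant since $G$ acts linearly on $\mathbf L$ -- joining $g\circ f$ to $\op{Id}_{\mathbb X}$ within $\omega$ by (a) and (b).
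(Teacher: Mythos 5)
There is a genuine gap here, and in fact more than one. First, the statement you are proving is presented in the paper as an open \emph{conjecture}: the authors state explicitly that it has been settled only for compact metric $G$-$\ANE$-spaces and for actions of zero-dimensional compact groups, and that ``in general this is still a conjecture.'' So there is no proof in the paper to compare against, and a complete argument would be a new result. Second, your own write-up concedes the decisive step: the assembly of the $G$-$\op{CW}$-complex across the orbit-type strata is introduced with ``I expect this gluing \dots to be the crux of the proof. Granting it, \dots''. That gluing -- extending a $G$-$\op{CW}$-domination of $\Bbb X_{>(H)}$ across the stratum $\Bbb X_{(H)}$ compatibly with the attaching data, while keeping all homotopies $\omega$-small over a possibly transfinite and non-locally-finite family of strata -- is exactly where the known proofs stop, and ``granting it'' is not a proof.

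Beyond the admitted gap, the approach is structurally mismatched with the hypotheses. The conjecture is for an \emph{arbitrary} compact group $G$, but your argument leans on the slice theorem, on tube neighbourhoods $G\times_{G_x}S_x$, and on an orbit-type stratification ordered by subconjugacy -- all of which require $G$ to be a compact Lie group. For a general compact group only the approximate slice theorem (Theorem~2.2 of the paper) is available, $G/H$ is a $G$-$\ANE$ only for \emph{extensor} subgroups $H$, and the orbit-type poset is far less tractable. You also invoke ``Murayama's theorem'' that passing to orbit-type subspaces preserves the $G$-$\ANE$ property; in this paper that is Theorem~1.2/1.4, proved only for compact Lie groups, and the paper's own logic runs in the opposite direction (the conjecture implies Murayama's theorem), so leaning on it here is at best delicate and at worst circular. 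The claim that each $\Bbb X_{(H)}/G$ is an $\op{ANR}$ also does not follow from that theorem without a separate argument about orbit spaces of $G$-$\ANE$'s. To make progress you would need either to restrict to the two cases where the conjecture is known, or to supply the stratum-by-stratum gluing in full -- which is precisely the open problem.
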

 This has so far been settled only for two special cases: for compact metric
$G$-$\ANE$-spaces \cite{Ag1}, and for metric $G$-$\ANE$-spaces
with an action of a zero-dimensional compact group \cite{Ag2}. In
general this is still a conjecture.

 We consider one more question concerning the closeness of
$G$-$\op{CW}$-complexes and $G$-$\ANE$-spaces. It is well-known
 that if $\Bbb Y$ is a $G$-$\op{CW}$-complex then for
each closed family ${\cal C}\subset\op{Orb}_G$ of orbit types, the
$G$-subspace  $\Bbb Y_{\cal C}$ of points  of orbit type ${\cal
C}$ is also a $G$-$\op{CW}$-complex  \cite{DT}. Murayama \cite{Mu}
proved that for the family of orbit type ${\cal C}=\{(K)\mid
(K)\ge(H)\}$ the complete analogy with $G$-$\op{CW}$-complexes is
preserved.
    \begin{thm}\label{one14}
 If $G$ is a compact Abelian Lie group and
$\Bbb X$ is a metric $G$-$\ANE$-space then  $\Bbb X^{(H)}=G\cdot
\Bbb X^H\in G$-$\ANE$ for each closed subgroup  $H<G$.
  \end{thm}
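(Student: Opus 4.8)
The plan is to establish the theorem in two moves: first build a single $G$-equivariant retraction of some $G$-neighbourhood of $\mathbb{X}^H$ in $\mathbb{X}$ onto $\mathbb{X}^H$, and then feed this retraction into the $G$-$\ANE$ property of $\mathbb{X}$ itself. I would first record that, $G$ being Abelian, $H$ is normal, so $\mathbb{X}^H$ is a closed $G$-invariant subspace on which $G$ acts through $G/H$; in particular $\mathbb{X}^{(H)}=G\cdot\mathbb{X}^H=\mathbb{X}^H$ and no genuine saturation occurs.

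For the retraction I would linearize. Embed $\mathbb{X}$ as a closed $G$-subspace of a Banach $G$-space $L$ — this is possible for a compact Lie group, and producing such an embedding (equivalently, extending the $G$-action on $\mathbb{X}$ to a linear ambient space) is exactly the kind of step the method announced in the abstract is meant to supply. Since $\mathbb{X}\in G$-$\ANE$ and $\mathbb{X}$ is closed in $L$, the identity of $\mathbb{X}$ extends to a $G$-retraction $r\colon\Omega\to\mathbb{X}$ of an open $G$-neighbourhood $\Omega\supseteq\mathbb{X}$. Now let $P\colon L\to L^H$ be the averaging operator $P(v)=\int_H h\,v\,dh$: it is a continuous linear projection of $L$ onto the closed linear subspace $L^H$, and, crucially, it is $G$-equivariant — for $g\in G$ the change of variable $h\mapsto g^{-1}hg$ is a Haar-preserving automorphism of $H$ because $H\trianglelefteq G$, and it converts $P(gv)$ into $gP(v)$ (for Abelian $G$ this is immediate). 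Since $h\,r(w)=r(hw)=r(w)$ for every $w\in\Omega\cap L^H$, the restriction of $r$ carries $\Omega\cap L^H$ into $\mathbb{X}^H$. Put $V=\{x\in\mathbb{X}\colon P(x)\in\Omega\}$: this is open in $\mathbb{X}$ (continuity of $P$), $G$-invariant (equivariance of $P$ and invariance of $\Omega$), and contains $\mathbb{X}^H$ (there $P(x)=x\in\mathbb{X}\subseteq\Omega$). Then $\rho\colon V\to\mathbb{X}^H$, $\rho(x)=r(P(x))$, is a well-defined $G$-map with $\rho|_{\mathbb{X}^H}=\op{id}$, i.e. a $G$-retraction of a $G$-neighbourhood of $\mathbb{X}^H$ onto $\mathbb{X}^H$.

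The conclusion is then formal. Let $A$ be a closed $G$-subspace of a metrizable $G$-space $Z$ and $f\colon A\to\mathbb{X}^H$ a $G$-map. Viewing $f$ as a $G$-map into $\mathbb{X}$ and using $\mathbb{X}\in G$-$\ANE$, extend it to a $G$-map $\tilde f\colon U\to\mathbb{X}$ over an open $G$-neighbourhood $U$ of $A$. The set $U'=\tilde f^{-1}(V)$ is an open $G$-neighbourhood of $A$ (it contains $A$ since $\tilde f(A)=f(A)\subseteq\mathbb{X}^H\subseteq V$), and $F=\rho\circ\tilde f|_{U'}\colon U'\to\mathbb{X}^H$ is a $G$-map with $F|_A=\rho\circ f=f$. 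Hence every equivariant map into $\mathbb{X}^H$ from a closed invariant subspace extends equivariantly over a neighbourhood, which is precisely $\mathbb{X}^H\in G$-$\ANE$.

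The main obstacle I anticipate is concentrated in the linearization together with the equivariance of $P$. One really needs $\mathbb{X}$ to sit inside an honest linear $G$-space, and one needs the $H$-averaging operator to commute with the full $G$-action; the latter is exactly where the hypothesis that $G$ is Abelian (so that $H$ is normal) is indispensable. Without normality the operator $P$ is only $N_G(H)$-equivariant, the retraction above is only an $N_G(H)$-retraction onto $\mathbb{X}^H$, and recovering $\mathbb{X}^{(H)}=G\times_{N_G(H)}\mathbb{X}^H$ as a $G$-$\ANE$ would require an additional induction/gluing step over $G/N_G(H)$ — which is why the clean statement is phrased for Abelian groups.
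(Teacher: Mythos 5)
Your argument is correct, and it takes a genuinely different route from the paper. The paper does not prove Theorem~\ref{one14} directly: it derives it as a special case of Theorem~\ref{one1411} (the family $\{(K)\mid (K)\ge (H)\}$ is closed, hence saturated), and Theorem~\ref{one1411} is in turn reduced, via Proposition~\ref{one17} and Lemma~\ref{88}, to the problem of extending group actions (Theorem~\ref{one}), whose proof runs through a transfinite induction over a locally finite cover by slice neighbourhoods combined with induction on the group via Palais's Metatheorem. You instead exploit what is special about the Abelian case --- $H\triangleleft G$, so $\Bbb X^{(H)}=\Bbb X^H$ is $G$-invariant --- to build an explicit $G$-equivariant neighbourhood retraction $r\circ P$ of $\Bbb X$ onto $\Bbb X^H$ by Haar averaging over $H$ inside a linear ambient $G$-space; a $G$-neighbourhood retract of a $G$-$\ANE$ is a $G$-$\ANE$, and you are done. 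This is far more elementary and close in spirit to Murayama's original slice-based argument (whose reliance on $G_x$-slices inside $\Bbb X^H$ is precisely what breaks down for nonabelian $G$, as the paper points out); what it does not buy is the paper's real objective, namely the nonabelian and general saturated-family statements, for which your averaging operator is only $\op{N}(H)$-equivariant. One step you should support with a correct reference is the closed equivariant embedding of a metric $G$-space into a Banach (or normed linear) $G$-space: this is true (e.g.\ embed $\Bbb X$ into a Banach space $B$ and pass to $C(G,B)$ with the regular representation, or invoke Antonyan's equivariant embedding theorems), but the naive candidate $B(X)$ of all bounded functions carries a discontinuous $G$-action for noncompact $X$ --- exactly the deficiency (e) the paper flags in Section~6 --- so the assertion cannot be left as a bare ``this is possible.''
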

 If a compact Lie group  $G$ is nonabelian, then $\Bbb X^H$ admits the action of
the normalizer $\op{N}(H)$ of $H$ and cannot in general be
endowed with the action of $G$. Therefore there exists a point
$x\in \Bbb X^G\subset\Bbb X^H$ without $G_x$-slices in $\Bbb X^H$.
Since exactly this  argument was the key in the proof of
\cite[Proposition 8.7]{Mu}, the case of such a group cannot be
considered to be settled.

\smallskip We show that Conjecture~\ref{one141} implies
the validity of Theorem~\ref{one14} for arbitrary compact groups.
Let  $\omega\in\cov\Bbb X$, and let $\Bbb Y$ be a
$G$-$\op{CW}$-complex and $\Bbb X\matop{\rightarrow}{}{f}\Bbb
Y\matop{\rightarrow}{}{g}\Bbb X$ $G$-maps such that $g\circ
f\matop{\simeq_G}{}{}\op{Id}_{\Bbb X}\op{rel}[\omega]$. Since
$\Bbb Y^{(H)}$ is a $G$-$\op{CW}$-complex and therefore $\Bbb
Y^{(H)}\in G$-$\ANE$ (\cite[ Theorem 12.5]{Mu}), $\Bbb X^{(H)}$
admits an arbitrarily fine domination by $G$-$\ANE$-complexes. By
\cite[Theorem 9.2]{Mu} it follows that $\Bbb X^{(H)}\in
G$-$\ANE$.

Based on the remark made above it thus
follows that Theorem~\ref{one14} is proved for compact metric $G$-$\ANE$-spaces and for
metric $G$-$\ANE$-spaces with an action of a zero-dimensional
compact group. In the present paper we develop a new approach
based on a reduction of the problem to that of extending the
action of groups which is also of independent interest. As a
result we obtain the following
    \begin{thm}\label{one1411}
 Let  $G$ be a compact Lie group, $\Bbb X$  a $G$-$\ANE$-space and
${\cal C}\subset\op{Orb}G$ a saturated family of orbit types. Then
$\Bbb X_{\cal C}\subset\Bbb X$ is $G$-$\ANE$.
   \end{thm}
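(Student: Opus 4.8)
The plan is to reduce the general case to the case of the family ${\cal C}=\{(K)\mid (K)\ge(H)\}$ handled by Murayama (Theorem 1.2), and to do this by extending the action of $G$ on $\Bbb X^H$ — which a priori only carries a $\op{N}(H)$-action — to a larger space. First I would recall the Slice Theorem: since $G$ is a compact Lie group, every $G$-$\ANE$-space $\Bbb X$ is locally $G$-contractible and admits slices at every point, so for each closed subgroup $H<G$ the set $\Bbb X^H$ is a closed $\op{N}(H)$-$\ANE$-space (with respect to the Weyl group $W(H)=\op{N}(H)/H$ acting on it, $\Bbb X^H\in W(H)$-$\ANE$). A saturated (i.e. closed, downward-closed in the order of orbit types) family ${\cal C}$ can be built up from the "principal-type-above-$H$" pieces $\Bbb X^{(H)}=G\cdot\Bbb X^H$ as $H$ ranges over representatives of the conjugacy classes in ${\cal C}$; the set $\Bbb X_{\cal C}$ is the union of finitely many (up to conjugacy, by compactness of $G$ and the finiteness of orbit types on a $G$-$\ANE$) such pieces, and I would organise the proof by induction on the number of orbit-type strata, using a Hanner-type local-to-global and union theorem for $G$-$\ANE$-spaces to glue.

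The heart of the argument is the extension-of-action step announced in the introduction. Given the $\op{N}(H)$-space $\Bbb X^H$, I would form the induced (twisted-product) space $G\times_{\op{N}(H)}\Bbb X^H$ and compare it with $\Bbb X^{(H)}=G\cdot\Bbb X^H$ via the natural $G$-map $G\times_{\op{N}(H)}\Bbb X^H\to\Bbb X^{(H)}$, $[g,x]\mapsto gx$. The key technical claim will be that this map is a $G$-homeomorphism onto $\Bbb X^{(H)}$ when restricted appropriately, or at least that $\Bbb X^{(H)}$ is a $G$-retract of a space obtained by extending the $\op{N}(H)$-action on $\Bbb X^H$ to a $G$-action on a neighbourhood-type enlargement. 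Concretely, the plan is: (i) show $\Bbb X^H$ is $W(H)$-$\ANE$; (ii) deduce that the twisted product $G\times_{\op{N}(H)}\Bbb X^H$ is $G$-$\ANE$ (induction functor preserves equivariant extensor properties — this is where Murayama's Theorem 12.5-type machinery, or a direct slice argument, is used); (iii) identify $\Bbb X^{(H)}$ as the image, and show the orbit map restricted to $\Bbb X^H$ gives a closed embedding whose image meets the requisite local $\ANE$ conditions; (iv) conclude $\Bbb X^{(H)}\in G$-$\ANE$, then feed this into the inductive gluing over the strata of ${\cal C}$ using the $G$-equivariant union theorem (if $A,B$ and $A\cap B$ are closed $G$-$\ANE$-subspaces then $A\cup B\in G$-$\ANE$).

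The main obstacle is precisely the one the authors flag in the paragraph after Theorem 1.2: when $G$ is nonabelian there can be a point $x\in\Bbb X^G\subset\Bbb X^H$ with $G_x=G$ that has no $G$-slice inside $\Bbb X^H$, because $\Bbb X^H$ only sees the $\op{N}(H)$-action. So the naive "put the $G$-action back on $\Bbb X^H$" strategy fails, and the twisted-product construction must be handled with care near such fixed points — the map $G\times_{\op{N}(H)}\Bbb X^H\to\Bbb X$ need not be injective there. I expect the resolution to require the "extension of actions" technique promised in the abstract: rather than extending the action on $\Bbb X^H$ itself, one extends it on an auxiliary mapping-cylinder or join construction that absorbs the failure of local sliceness, and then retracts back $G$-equivariantly. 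Verifying that this auxiliary space is $G$-$\ANE$ and that the retraction exists — i.e. checking the equivariant local extension property at the bad fixed points — is the step I would budget the most effort for; everything else (the slice theorem, the induction functor, the union theorem, the stratification of a saturated family) is standard equivariant $\ANE$-theory once that core is in place.
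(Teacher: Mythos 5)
Your proposal correctly identifies the central obstruction --- the map $G\times_{\op{N}(H)}\Bbb X^H\to\Bbb X^{(H)}$, $[g,x]\mapsto g\cdot x$, fails to be injective at points whose isotropy group is not contained in $\op{N}(H)$ --- but it does not overcome it: the ``auxiliary mapping-cylinder or join construction that absorbs the failure of local sliceness'' is precisely the missing content, and your outline gives no indication of how such a space is built, why it is a $G$-$\ANE$, or why it retracts $G$-equivariantly onto $\Bbb X^{(H)}$. That step is not a verification to be budgeted for; it is the theorem. Two structural claims are also incorrect as stated. First, ``saturated'' here means order-convex ($(K)\ge(L)\ge(H)$ with $(K),(H)\in{\cal C}$ forces $(L)\in{\cal C}$), not closed or downward-closed, so $\Bbb X_{\cal C}$ is in general not a union of sets of the form $\Bbb X^{(H)}$. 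Second, a metric $G$-$\ANE$-space need not have finitely many orbit types even for a compact Lie group (take $\bigsqcup_n G/H_n$ over infinitely many pairwise non-conjugate closed subgroups), so an induction on the number of strata via a two-set union theorem does not get started.

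The paper's route is genuinely different and bypasses $\Bbb X^H$ and the Weyl group entirely. The engine is a theorem on extending group actions (Theorem \ref{one}, used in the form of Proposition \ref{one17}): if $\Bbb X_{\cal C}$ sits as a closed $G$-subset in a metric $G$-space $\Bbb Y$, then it also sits as a closed $G$-subset in a metric $G$-space $\Bbb Y_1$ \emph{all} of whose orbits have type in ${\cal C}$, together with a $G$-map $h\colon\Bbb Y\to\Bbb Y_1$ fixing $\Bbb X_{\cal C}$ pointwise. For a closed (i.e.\ upward-closed) family ${\cal C}$ one extends $\op{Id}_{\Bbb X_{\cal C}}$ to a $G$-map $r\colon\Bbb Y_1\to\Bbb X$ using $\Bbb X\in G$-$\op{A[N]E}$; since $G$-maps can only increase orbit type and $\op{type}(\Bbb Y_1)\subset{\cal C}$, the image of $r$ automatically lies in $\Bbb X_{\cal C}$, and $r\circ h$ is the desired retraction (Lemma \ref{88}). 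The saturated case then reduces to the closed case because $\Bbb X_{\cal C}$ is open in $\Bbb X_{\cal F}$, where ${\cal F}$ is the closed hull of ${\cal C}$ (Lemma \ref{98}). Proposition \ref{one17} itself is proved by transfinite induction over a locally finite closed cover by pieces admitting slice maps, combined with Palais's metatheorem (induction on the compact Lie group); twisted products enter only locally, over a single slice, where injectivity is not an issue. To salvage your outline you would in effect have to prove something equivalent to Proposition \ref{one17}; the stratumwise decomposition and gluing are then unnecessary.
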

 Since for each $H<G$ the family of orbit types $\{(K)\mid
(K)\ge(H)\}$  evidently  satisfies the hypotheses of this theorem,
the strengthening of Murayama's theorem is valid for arbitrary
action of compact Lie groups. We conjecture that the further
generalization of Theorem~\ref{one1411} for compact group action
on a metric space is also valid, provided that ${\cal
C}\subset\op{Orb}G$ is a saturated family in which the
intersection ${\cal C}\cap{\cal E}$ with the family ${\cal E}$ of
extensor orbit types is cofinal in ${\cal E}$ \footnote{Recall
that the orbit type $(H)$ is called {\it extensor}, i.e. $G/H$ is
a metric $G$-$\ANE$-space (related definitions are in Section
2).}. The following theorem,
proved in \cite[Theorem 9]{AgRep},
asserts
in favor of this conjecture: if $\Bbb X$ is a metric
$G$-$\ANE$-space, then each $G$-subspace $\Bbb Y\subset\Bbb X$
containing the bundle $\Bbb X_{\mathcal E}$ of extensor orbit
types is a $G$-$\ANE$.

We cannot omit the saturation condition from
Theorem~\ref{one1411}
(since there exists a 2-dimensional compact counterexample), but
we do have a pleasant (and important) exception for
$(\Sigma,d)$-universal (in the sense of Palais \cite[p.59]{Pal})
$G$-spaces. Until recently the solution of Palais problem on
existence of universal $G$-spaces was known only for finite
collection $\Sigma\subset\op{Orb}_G$ of orbit types and finite
dimension $d<\infty$ \cite[2.6]{Pal}; for finite dimension $d$
\cite{AgIzv}. The final solution of Palais problem (without any
restrictions on dimension $d$ and collection $\Sigma$) was
obtained in \cite{Ag}: the equivariant Hilbert space $\Bbb L_2$ is
an $(\op{Orb}_G,\infty)$-universal $G$-space. The following result
is a cornerstone of the theory of such universal $G$-spaces for
which we prefer alternative term -- an {\it isovariant absolute
extensor}, $\op{Isov-AE}$.
    \begin{thm}\label{one15}
 Let  $G$ be a compact Lie group, ${\cal C}\subset\op{Orb}_G$ a family of orbit types and
$\Bbb X$ an isovariant absolute extensor. Then the bundle $\Bbb
X_{\cal C}\subset\Bbb X$ of orbit type ${\cal C}$ is $G$-$\ANE$.
   \end{thm}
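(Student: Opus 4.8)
The plan is to verify the $G$-$\ANE$ property of $\Bbb X_{\mathcal C}$ straight from the extension definition, using that $\Bbb X$, being an isovariant absolute extensor, is a metrizable $G$-$\ANE$ (part of the basic theory recalled in Section~2), and then exploiting the \emph{isovariant} extension property to keep the orbit types under control. So let $Z$ be a $G$-space, $A\subset Z$ a closed $G$-subspace and $\mps{f}{A}{\Bbb X_{\mathcal C}}$ a $G$-map; we must extend $f$ $G$-equivariantly over a $G$-neighborhood of $A$ so that the image still lies in $\Bbb X_{\mathcal C}$. Because $\Bbb X$ is a $G$-$\ANE$, there already is a $G$-map $\bar f\colon V\to\Bbb X$ on a $G$-neighborhood $V\supset A$ with $\bar f|_A=f$; the whole difficulty is that $\bar f$ need not take values in $\Bbb X_{\mathcal C}$ and cannot be coerced there by a small $G$-homotopy, since a generic equivariant perturbation only shrinks isotropy while membership in $\Bbb X_{\mathcal C}$ is a constraint on isotropy \emph{types}. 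A standard localization via the Slice Theorem — using $\Bbb X_{\mathcal C}\cap(G\times_K S)=G\times_K S_{{\mathcal C}'}$ for the induced family ${\mathcal C}'$ of $K$-orbit types, that $G/K$ is a compact $G$-manifold and hence a $G$-$\ANE$, and the fact (standard in this theory) that a slice of an isovariant absolute extensor is again one for the slice group — together with induction on $(\dim G,\ \#\pi_0 G)$ shows that the difficulty is concentrated near the $G$-fixed set $\Bbb X^G$, i.e.\ at orbits of type $(G)$; this is the case requiring a genuinely new idea.

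Here the method of extending actions enters. The idea is to build an auxiliary $G$-space $\widehat Z$ containing $A$, with a $G$-map $\pi\colon\widehat Z\to Z$ restricting to the identity on $A$ and a $G$-retraction $q\colon\widehat Z\to A$ extending $\id_A$, arranged so that $\widehat Z$ carries \emph{only orbit types belonging to ${\mathcal C}$}: informally, over the part of $Z$ near $A$ one replaces the given action by a new one in which the ``excess'' isotropy that $f$ exhibits — the strict inclusions $G_a\subsetneq G_{f(a)}$ — has been realized honestly, so that the composite $f\circ q\colon\widehat Z\to\Bbb X_{\mathcal C}$ becomes isovariant. Once this is set up, the isovariant absolute extensor property of $\Bbb X$ is applied to the pair $(\widehat Z\times[0,1],\ \widehat Z\times\{0\}\cup A\times[0,1])$ — or, depending on the precise construction, to the inclusion of $\widehat Z$ into a suitable mapping cylinder — working through the orbit types of $\widehat Z$ in an order compatible with their partial ordering: at each stage an isovariant extension of the already-constructed map over the next stratum exists by the hypothesis on $\Bbb X$, and, being isovariant, automatically has image in the strata already treated, hence in $\Bbb X_{\mathcal C}$. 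This produces an isovariant — in particular $G$-equivariant — map $\widetilde f\colon\widehat U\to\Bbb X_{\mathcal C}$ on a $G$-neighborhood of $A$ in $\widehat Z$ agreeing with $f$ on $A$; pushing it forward along $\pi$ yields the required $G$-extension of $f$ into $\Bbb X_{\mathcal C}$ over a $G$-neighborhood of $A$ in $Z$.

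The hard part will be the construction of $\widehat Z$ together with the verification that the relevant inclusions become isovariant after the change of action — this is exactly the place where the hypothesis that $\Bbb X$ is an isovariant absolute extensor, and not merely a $G$-$\ANE$, is indispensable, and it is also the reason the saturation hypothesis of Theorem~\ref{one1411} can be dispensed with here (recall that without Isov-AE that hypothesis is genuinely necessary, by the $2$-dimensional compact counterexample). An ordinary $G$-equivariant extension of $f$ over a stratum $Z^{(L)}$ may drift into orbit types outside ${\mathcal C}$, which is precisely the pathology the saturation condition is designed to exclude; an \emph{isovariant} extension cannot do so, because it preserves isotropy types exactly, so once $\widehat Z$ has been manufactured with all its strata of type in ${\mathcal C}$, every isovariant map on it lands automatically in $\Bbb X_{\mathcal C}$. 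A secondary technical point is the transfinite bookkeeping: for a compact Lie group the poset of orbit types occurring near a fixed point need not be finite, so the stratum-by-stratum extension must be organized as a (possibly transfinite) induction with the usual care at limit ordinals, and one must check that the isovariant absolute extensor property passes through such limits.
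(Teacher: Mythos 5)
Your key idea --- manufacture an auxiliary $G$-space of orbit type ${\cal C}$ by re-extending the action, and then observe that an \emph{isovariant} extension into $\Bbb X$ automatically lands in $\Bbb X_{\cal C}$ because isovariance preserves isotropy exactly --- is precisely the mechanism of the paper, and your explanation of why the saturation hypothesis can be dropped here is the right one. But the execution has a genuine gap. You set the problem up as extending an arbitrary partial $G$-map $\mps{f}{A}{\Bbb X_{\cal C}}$, $A$ closed in $Z$, and then ask for a $G$-space $\widehat Z$ \emph{containing $A$} all of whose orbit types lie in ${\cal C}$. Since $A$ itself need not have orbit type ${\cal C}$ (the isotropy $G_a$ can be strictly smaller than $G_{f(a)}$), such a $\widehat Z$ cannot contain $A$ as a $G$-subspace; you are forced into ``changing the action on $A$'', at which point the maps $\mps{\pi}{\widehat Z}{Z}$ (``restricting to the identity on $A$'') and $\mps{q}{\widehat Z}{A}$ stop being equivariant for the original action, and the final step of ``pushing $\widetilde f$ forward along $\pi$'' does not typecheck: to induce a map on a neighborhood of $A$ in $Z$ you would need $\widetilde f$ to factor through $\pi$, which nothing in the construction arranges. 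Moreover, if a genuine $G$-retraction $\mps{q}{\widehat Z}{A}$ with $f\circ q$ isovariant existed, the subsequent homotopy-extension apparatus over $\widehat Z\times[0,1]$ would be superfluous, since $f\circ q$ already extends $f$.

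The paper avoids all of this by using the neighborhood-retract characterization instead of the map-extension one: embed $\Bbb X_{\cal C}$ itself as a closed $G$-subspace of a metric $G$-space $\Bbb Y$ and retract a $G$-neighborhood onto it. Then the subspace to be preserved is $A=\Bbb X_{\cal C}$, which \emph{does} have orbit type ${\cal C}$, so the extension-of-action theorem (Theorem \ref{one}, in the form of Proposition \ref{one17}) applies verbatim: it yields a metric $G$-space $\Bbb Y_1$ of orbit type ${\cal C}$ containing $\Bbb X_{\cal C}$ as a closed subset together with a $G$-map $\mps{h}{\Bbb Y}{\Bbb Y_1}$ fixing $\Bbb X_{\cal C}$ --- note that the comparison arrow goes from the old space to the new one, the opposite of your $\pi$, which is exactly what makes the final composition work. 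A single application of the isovariant absolute extensor hypothesis to the inclusion $\Bbb X_{\cal C}\hookrightarrow\Bbb X$, viewed as a partial isovariant map defined on the closed subset $\Bbb X_{\cal C}$ of $\Bbb Y_1$, gives $\mps{r}{\Bbb Y_1}{\Bbb X}$ with image in $\Bbb X_{\cal C}$, and $r\circ h$ is the required retraction; no stratum-by-stratum or transfinite induction is needed at this stage (that machinery lives inside the proof of Theorem \ref{one}, which is the genuinely hard part you correctly flag but do not supply, and whose statement differs from the one your argument needs). You also omit the reduction to the case $(G)\in{\cal C}$: the extension-of-action theorem requires it, so one first treats ${\cal C}'={\cal C}\cup\{(G)\}$, obtaining $\Bbb X_{{\cal C}'}\in G$-$\op{AE}$, and then uses that $\Bbb X_{\cal C}$ is open in $\Bbb X_{{\cal C}'}$.
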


  \smallskip
The consequences of this theorem and another results of the theory
of isovariant absolute extensors will be presented in the
subsequent publications of the first author. The proofs of
Theorems \ref{one1411} and \ref{one15} will be based on the {\it
problem of extending the action of groups} which was first posed
by Shchepin in view of its connection with the problem of
extending equivariant maps (see \cite{AgRep}). The diagram ${\cal
D}=\{\Bbb
X\mathop{\rightarrow}\limits^{p}X\mathop{\hookrightarrow}\limits^{i}
Y\}$, in which the $G$-space $\Bbb X$  has the orbit type ${\cal
C}\subset\op{Orb}_G$, $p\colon\Bbb X\rightarrow X$ is an orbit
projection and $i$ is a closed topological embedding of the orbit
space $X$ into a space $Y$, will be called {\it ${\cal
C}$-admissible}. We say that the {\it problem of extending the
action is solvable for the ${\cal C}$-admissible diagram $\cal
D$}, provided that there exists an equivariant embedding $j:\Bbb
X\hookrightarrow\Bbb Y$ into a $G$-space $\Bbb Y$ of orbit type
${\cal C}$ (called a {\it ${\cal C}$-solution} of the problem of
extending the action for given diagram) covering $i$, i.e. the
embedding $\tilde{j}:X\hookrightarrow p(\Bbb Y)$ of orbit spaces
induced by $j$ coincides with $i$. Note that this definition
implies that the embedding $j$  is closed and $p(\Bbb Y)=Y$. If
the family of orbit types ${\cal C}$ coincides with $\op{Orb}_G$,
then the notation ${\cal C}$ is omitted.

\smallskip We say that  the problem of extending the action
(denoted briefly by PEA) is {\it solvable for the class $\cal F$ of
spaces}, if for each admissible diagram $\cal D$ in which  $\Bbb X,
X$ and $Y$ belong to $\cal F$ there exists a solution of the PEA
for which $\Bbb Y\in{\cal F}$. For compact group $G$, the PEA is
solvable for  the class of stratifiable spaces  (see
\cite{AgRep}).  Here for  the class of metric spaces we supplement
this result with an information on the $G$-orbit type of a
solution  of the PEA.
 \begin{thm}\label{one}  Let
$G$ be a compact Lie group, ${\cal C}\subset\op{Orb}_G$ a family
of orbit types with $(G)\in {\cal C}$ and ${\cal D}=(\Bbb
X\mathop{\rightarrow}\limits^{p}X\mathop{\hookrightarrow}\limits^{i}
Y)$ a metric ${\cal C}$-admissible diagram. Then for each solution
$s\colon\Bbb X\hookrightarrow\Bbb Y$ of the $\op{PEA}$ for ${\cal
D}$ there exists a metric ${\cal C}$-solution $s_1\colon\Bbb
X\hookrightarrow\Bbb Y_1$ of the $\op{PEA}$ for ${\cal D}$
majorized by  $s$, $s\ge s_1$ \footnote{ Recall that {\it $s$
majorizes $s_1$} if there exists a $G$-map $\mps{h}{\Bbb Y}{\Bbb
Y_1}$ such that $h\circ s=s_1$, $h\restriction_{\Bbb
X}=\op{Id}_{\Bbb X}$ and $\tilde h=\op{Id}_{Y}$. }.
  \end{thm}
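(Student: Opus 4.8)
The plan is to start from the arbitrary solution $s\colon\Bbb X\hookrightarrow\Bbb Y$ of the PEA for $\cal D$ and surgically ``cut down'' its orbit types to lie in $\cal C$ without disturbing the orbit‐space embedding $i$. First I would observe that since $(G)\in\cal C$ and $\Bbb X$ already has orbit type $\cal C$, every orbit of $\Bbb X$ inside $\Bbb Y$ is of type in $\cal C$; the obstruction is only the ``extra'' points of $\Bbb Y\setminus s(\Bbb X)$ whose orbit types may fall outside $\cal C$. The key construction is to replace $\Bbb Y$ by the $G$-subspace $\Bbb Y_{\cal C}$ of points of orbit type in $\cal C$ — but this need not project onto all of $Y$. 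So instead I would use the fixed point set: for the generic orbit type $(G)$ we have the $G$-fixed set $\Bbb Y^G$, which projects homeomorphically onto $Y^{(G)}$, and more generally I would work inductively over the (partially ordered, finite after passing to a neighborhood since $G$ is a Lie group) set of orbit types occurring in the diagram.

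The central step is an \emph{equivariant mapping cylinder / collar} argument. Choose a $G$-invariant metric on $\Bbb Y$ and consider, for each bad orbit type $(K)\notin\cal C$ appearing in $\Bbb Y$, the closed $G$-set $A_{(K)}$ of points whose orbit type is $\le(K)$ but not in $\cal C$; these are disjoint from $s(\Bbb X)$. Using equivariant Dugundji-type extension (available because $G$ is a compact Lie group, so that $\Bbb X$, $X$, $Y$ are in the class of metric spaces for which PEA is solvable by~\cite{AgRep}), I would build a $G$-retraction $r\colon\Bbb Y\to\Bbb Y_1$ onto a $G$-subspace $\Bbb Y_1$ of orbit type $\cal C$ which is the identity on $s(\Bbb X)$ and induces the identity on orbit spaces $\tilde r=\op{Id}_Y$. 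Concretely: $\Bbb Y_1$ is obtained by collapsing each slice through a bad point radially toward the union of $s(\Bbb X)$ and the good part, an operation one can perform slice-by-slice using the slice theorem and then glue by a partition of unity on the (paracompact) orbit space $Y$. Setting $s_1 := r\circ s = s$ (as maps into $\Bbb Y_1$) and $h := r$ gives exactly the required majorization $s\ge s_1$.

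The main obstacle I anticipate is \emph{making the retraction $r$ cover $\op{Id}_Y$ while simultaneously being equivariant and fixing $s(\Bbb X)$ pointwise} — these three demands interact, because collapsing a bad slice must be done in a way compatible with the orbit projection $p\colon\Bbb Y_1\to Y$, and near points of $s(\Bbb X)$ the slices are already of the correct type so the collapse must taper off continuously. The slice theorem for compact Lie groups is what rescues this: it gives local product structure $G\times_{G_y}S_y$ around each orbit, and the ``cutting'' of orbit types is a purely local modification of the slice $S_y$ (a $G_y$-space), where one can use an equivariant homotopy pushing $S_y$ onto its subset of allowed isotropy; the global gluing is then routine via an invariant partition of unity on $Y$ subordinate to the slice neighborhoods. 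A secondary technical point is verifying that the resulting $\Bbb Y_1$ is again metric — this follows since it is a $G$-subspace of a metric $G$-space (one may arrange $\Bbb Y_1\subset\Bbb Y$), and $p(\Bbb Y_1)=Y$ by construction since the collapse is fiberwise over $Y$.
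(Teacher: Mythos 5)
There is a fatal structural flaw in your central construction. You propose to realize $\Bbb Y_1$ as a $G$-subspace of $\Bbb Y$ (``one may arrange $\Bbb Y_1\subset\Bbb Y$'') onto which $\Bbb Y$ retracts, while simultaneously demanding $p(\Bbb Y_1)=Y$. These two requirements are incompatible: the fiber of the orbit projection over any $y\in Y$ is a single orbit, which is a minimal invariant set, so any invariant subspace $\Bbb Y_1\subset\Bbb Y$ with $p(\Bbb Y_1)=Y$ must contain every orbit entirely, i.e.\ $\Bbb Y_1=\Bbb Y$. For the same reason your ``radial collapse of a bad slice toward the good part'' has nothing to collapse toward fiberwise: over a point of $Y\setminus i(X)$ whose orbit has type $(K)\notin\cal C$ there is no good part of that (homogeneous) orbit. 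The passage from $\Bbb Y$ to $\Bbb Y_1$ must genuinely change the orbits — replace $G/K$ by some $G/K'$ with $K<K'$ and $(K')\in\cal C$ (the hypothesis $(G)\in\cal C$ guarantees the last-resort replacement by a point) — so $h\colon\Bbb Y\to\Bbb Y_1$ is necessarily a quotient-type map (an equigeny, in the paper's terminology), not a retraction onto a subspace. A secondary gap: your claim that only finitely many orbit types occur locally is false for compact Lie group actions on general metric spaces (it holds on manifolds, not here), so the induction over orbit types is not available.

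The paper's proof is organized quite differently and avoids these problems: it reformulates the statement as extending an equigeny $h\colon\Bbb X\to\Bbb X'$ with $\op{type}(\Bbb X')\subset\cal C$ over the closed embedding $\Bbb X\hookrightarrow\Bbb Y$ (Theorem \ref{three+++}), reduces to the case $\Bbb X^G=\varnothing$, covers a neighborhood of $\Bbb X$ by closed invariant sets $\Bbb E_\gamma$ admitting slice maps to $G/K$ with $(K)\in\cal C$, identifies each piece with a twisted product $G\times_K(\cdot)$ so that the problem descends to the proper subgroup $K<G$ (handled by induction via Palais's Metatheorem), and finally assembles the locally constructed quotient targets by transfinite induction using weak topologies and direct limits. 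If you want to salvage your outline, the slice-by-slice local modification is the right instinct, but it must produce new amalgamated spaces glued along the already-constructed part (as in Lemmas \ref{five+} and \ref{five-+1}), not subspaces of $\Bbb Y$, and the global assembly cannot be done by a partition of unity since the pieces do not live in a common ambient space.
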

We show in Section 3 that Theorem \ref{one} implies the validity
of Theorems  \ref{one1411} and \ref{one15}.

\section{Preliminary facts and results}

 In what follows we shall assume all spaces
 (resp. maps) to be metric (resp. continuous),
if they do not arise as a result of some constructions. For
$A\subset X$ we use standard notations: $\op{Cl}A$  -- for the
closure; $\op{Int}A$ -- for the interior. We use the notation
$f\restriction_{A}$ for the restriction of map $\mps{f}{X}{Y}$ on
$A\subset X$, or simply $f\restriction$, provided it is clear a
set to which we are referring. Since $f$ is an extension of
$f\restriction_{A}$, we denote this as
$f=\op{ext}(f\restriction_{A})$.

In what follows $G$ will be a compact group. An {\it action of $G$
on a space $X$} is a homomorphism $T:G\to\op{Aut} X$ of $G$ into
the group $\op{Aut} X$ of all autohomeomorphisms of $X$ such that
the map $G\times X\to X$ given by $(g,x)\mapsto T(g)(x)=g\cdot x$
is continuous. A space $X$ with a fixed action of $G$ is called a
{\it $G$-space}.

For any point $x\in X$, $G_{x} =\{g\in G\ |\ g\cdot x=x \}$ is a
closed subgroup of $G$ called  the {\it isotropy subgroup} of $x$;
$G(x)=\{g\cdot x\mid g\in G\}\subset X$ is called  the {\it orbit}
of $x\in X$. The set of all orbits is denoted by $X/G$ and the
natural map $p=p_{X}:X\to X/G$, given by $p(x)=G(x)$, is called
the {\it orbit projection}. We call the set $X/G$ of all orbits
equipped with the quotient topology induced by $p$ the {\it orbit
space of $X$} (see \cite{Br} for more details on compact
transformation groups). In what follows we shall denote $G$-spaces
and their orbit spaces as follows: $\Bbb X,\Bbb Y,\Bbb Z,\dots $
for $G$-spaces, and $X,Y,Z,\dots$ for their orbit spaces.

The map $\mps f{\Bbb X}{\Bbb Y}$ of $G$-spaces is called {\it
equivariant} or a {\it $G$-map}, if $f(g\cdot x)=g\cdot f(x)$ for
all  $g\in G$ and $x\in\Bbb X$. Each $G$-map $\mps f{\Bbb X}{\Bbb
Y}$ induces a map $\mps {\tilde f}{X}{Y}$ of orbit spaces by the
formula $\tilde f(G(x))=G(f(x))$. We call an equivariant
homeomorphism an {\it equimorphism}. The equivariant map $\mps
f{\Bbb X}{\Bbb Y}$ is said to be  {\it isovariant} if
$G_x=G_{f(x)}$ for all $x\in\Bbb X$.

The isovariant map $\mps f{\Bbb X}{\Bbb Y}$ is said to be an {\it
isogeny} if its induced map $\mps{\tilde f}{X}{Y}$ is a
homeomorphism. More generally, the equivariant map $f$ is said to
be an {\it equigeny} if $\tilde f$ is a homeomorphism \footnote{We
remark that {\it equigeny} is a new term and it is close to {\it
isogeny}, a term used by Palais in \cite[p. 12]{Pal}.}. By
\cite[3.7.10]{Eng} each equigeny and isogeny is perfect.

Observe that all $G$-spaces and $G$-maps generate a category
denoted by  $G$-$\op{TOP}$ or $\op{EQUIV}$-$\op{TOP}$, provided
that no confusion occurs. If  $"***"$ is any notion from
non-equivariant topology, then $"G$-$***"$ or $"\op{Equiv}$-$***"$
means the corresponding equivariant analogue.

The subset  $A\subset\Bbb X$ is called {\it invariant} or a {\it
$G$-subset}, if   $G\cdot \Bbb A=\Bbb A$. For each  closed
subgroup $H<G$ (in what follows this sign will be used for closed
subgroups; a normal closed subgroup is denoted as $H\triangleleft
G$) we introduce the following sets: $\Bbb X^H = \{x\in\Bbb X \mid
H\cdot x = x\}$ (which is called an {\it $H$-fixed set})  and
$\Bbb X_{H} = \{x\in \Bbb X\mid G_x=H\}$. It is clear that $\Bbb
X^{(H)} = \bigcup\{\Bbb X_K \mid K<G \ \text{and}\ H'<K\ \text{for
some conjugated subgroup}\ H'\sim H\}$ coincides with $G\cdot \Bbb
X^H$ and $\Bbb X_{(H)}\mean\bigcup\{\Bbb X_K \mid \ K<G\ \hbox{
conjugates with}\ H\}$ coincides with $G\cdot \Bbb X_H$.

Let $\op{Conj}_G$ be the set of all conjugated classes of closed
subgroups of $G$ and $\op{Orb}_G$ a collection of all homogeneous
spaces up to equimorphisms. We endow these sets with the following
partial orders:  $(K)\le (H)\Leftarrow\Rightarrow K\ \text{is
contained in}\ H'\in(H)$; \ $G/K\ge G/H\Leftarrow\Rightarrow\ $
there exists an  equivariant map $\mps{f}{G/K}{G/H}$. It is
evident that the bijection $(H)\in\op{Conj}_G\mapsto
G/H\in\op{Orb}_G$ inverses this order. In view of this we identify
these sets, provided that no confusion occurs, and we shall use
the unified term -- the {\it set of $G$-orbit types} and the
unified notation -- $\op{Orb}_G$.

We denote the family $\{(G_x)\mid x\in\Bbb X \}\subset\op{Orb}_G$
of {\it orbit types} of $\Bbb X$  by $\op{type}(\Bbb X)$. If
${\cal C}\subset\op{Orb}_G$, then $\Bbb X_{\cal C}\mean\{x\mid
(G_x)\in {\cal C}\}\subset\Bbb X$ -- {\it the bundle of orbit
types ${\cal C}$}; $\Bbb X^{\mathcal C}\mean\{x\mid (G_x)\ge (H)\
\text{for some}\ (H)\in {\mathcal C}\}\subset\Bbb X$ (here and
throughout the paper the sign $\mean$ is used for the introduction
of the new objects placed to the left of it). The $G$-space $\Bbb
X$ {\it has an orbit type  ${\cal C}$} if $\op{type}(\x)\subset
{\cal C}$ or $\Bbb X=\Bbb X_{\cal C}$. The family ${\cal
C}\subset\op{Orb}_G$ is said to be {\it closed} if $(H)\in{\cal
C}$ as soon as $(H)\ge(K)$ for some $(K)\in{\cal C}$; the family
${\cal C}\subset\op{Orb}_G$ is called {\it saturated} if
$(K)\ge(L)\ge(H)$ for some $(K),(H)\in{\cal C}$ implies
$(L)\in{\cal C}$. Also we say that the subfamily ${\cal F}$ of
${\cal C}\subset\op{Orb}_G$ is cofinal if for each $(H)\in{\cal
C}$ there exists $(K)\in{\cal F}$ with $(H)\le (K)$.

If $K<G$ and ${\cal C}\subset\op{Orb}_G$, we set  ${\cal
C}\restriction_K\mean\{(H)\in\op{Orb}_K \mid H<K \ \text{and }\
G/H\in{\cal C}\}$ and ${\cal C}'={\cal C}\cup\{(G)\}$. It is clear
that: $\Bbb X_{{\cal C}}$ is open in $\Bbb X_{{\cal C}'}$; if
$(K)\in{\cal C}$, then ${\cal C}\restriction_K$ coincides with
$({\cal C}\restriction_K)'$. It is clear also that  if ${\cal
C}\subset\op{Orb}_G$ is  closed family and ${\cal
C}\restriction_K\not=\emptyset$, then ${\cal C}\restriction_K$ is
closed in  $\op{Orb}_K$.

We now introduce several concepts related to extension of
equivariant and isovariant maps  partially defined in metric
$G$-spaces. A space $\Bbb X$ is called an {\it equivariant
absolute neighborhood extensor}, $\Bbb X\in\op G$-$\op{ANE}$, if
each $G$-map $\mps{\varphi}{\Bbb A}{\Bbb X}$ defined on a closed
$G$-subset $\Bbb A\subset \Bbb Z$ of metric $G$-space $\Bbb Z$ and
called the {\it partial $G$-map} can be $G$-extended onto a
$G$-neighborhood $\Bbb U\subset \Bbb Z$ of $\Bbb A$,
$\mps{\hat\varphi}{\Bbb U}{\Bbb X}, \hat\varphi\restriction_{\Bbb
A}=\varphi$. A space $\Bbb X$ is called an  {\it isovariant
absolute neighborhood extensor}, $\Bbb X\in\op{Isov}$-$\op{ANE}$,
if each partial isovariant map $\pam{\Bbb Z}{\Bbb A}{\varphi}{\Bbb
X}$ can be isovariantly extended onto a $G$-neighborhood $\Bbb
U\subset \Bbb Z$ of $\Bbb A$. If $\varphi$ can be extended in
$\Bbb U=\Bbb Z$, then $\Bbb X$ is called an {\it equivariant
absolute extensor} ($\Bbb X\in\op G$-$\op{AE}$) in the equivariant
case  or an {\it isovariant absolute extensor} ($\Bbb
X\in\op{Isov}$-$\op{AE}$) in the isovariant case. If the acting
group $G$ is trivial, then these notions are transformed into the
notions of absolute [neighborhood] extensors  -- $\op{A[N]E}$.

 The following examples of $\op
G$-$\op{AE}$-spaces are well-known: each Banach $G$-space (see
\cite[p. 117]{Matu} and \cite[p. 155]{Ab}); each linear normed
$G$-space for compact Lie group $G$ (see \cite[p. 488]{Mu}). We
shall depend heavily on the Slice theorem \cite{Br} which we
prefer to formulate as follows: $G/H\in G$-$\op{ANE}$ for each
closed subgroup $H$ of compact Lie group $G$.

 \smallskip
   \begin{defi} \label{aslice2+7} A closed subgroup $H<G$ of compact
group $G$ is called an extensor subgroup if one of the following
equivalent properties holds:
\begin{enumerate}
   \item[$(1)$] $G/H$ is finite-dimensional and locally  connected;
 \item[$(2)$] there exists a normal subgroup
$P\triangleleft G$ such that  $P<H$ and $G/P$ is a compact Lie
group;
 \item[$(3)$] $G/H$ is a topological manifold; or
 \item[$(4)$] $G/H$ is a metric $G$-$\ANE$-space.
\end{enumerate}
 \end{defi}
 The  equivalence of the first three properties was  proved in \cite{Ptr}; for
the proof of $(2)\equiv(4)$ see \cite{AU}. The last property
justifies the name of the term. The following theorem on
approximate slice of a $G$-space was proved in \cite{AU}.
     \begin{thm} \label{slice2-6} Let a compact group $G$ act on a $G$-space
$\Bbb X$. Then for each neighborhood ${\mathcal O}(x)$ of
$x\in\Bbb X$ there exist a neighborhood $\V=\V(e)$ of the unit
$e\in G$, an extensor subgroup $K<G,G_x<K$, and a slice map
$\mps{\alpha}{\Bbb U}{G/K}$ where $\Bbb U$ is an invariant
neighborhood of $x$ such that $x\in\alpha\ii({\mathcal V}\cdot
[K])\subset{\mathcal O}(x)$.
 \end{thm}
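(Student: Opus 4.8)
The plan is to prove Theorem~\ref{slice2-6}, the approximate slice theorem, by reducing the statement to the classical Slice Theorem for compact Lie groups via an inverse limit presentation of the compact group $G$.

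\medskip

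\noindent\textbf{Step 1: Reduce to compact Lie quotients.}
First I would invoke the fact that every compact group $G$ is an inverse limit $G=\varprojlim G/P_\lambda$ of compact Lie groups, where $\{P_\lambda\}$ ranges over a cofinal family of normal closed subgroups with $G/P_\lambda$ a Lie group; moreover for small enough $P_\lambda$ these subgroups can be taken inside any prescribed neighborhood of $e$. Fix the point $x\in\Bbb X$ and its neighborhood $\OO(x)$. The isotropy group $G_x$ is closed, so $G_x=\varprojlim (G_x P_\lambda)/P_\lambda$, and I would want to choose $\lambda$ so that $P_\lambda$ is small enough to be ``invisible'' near $x$, i.e. so that the $P_\lambda$-orbit of $x$ is contained in $\OO(x)$ (using continuity of the action and compactness of $P_\lambda$). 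This $P_\lambda$ will become the normal subgroup $P$ witnessing that the eventual $K$ is an extensor subgroup, via property~$(2)$ of Definition~\ref{aslice2+7}.

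\medskip

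\noindent\textbf{Step 2: Pass to the action of the Lie group $G/P$.}
Having fixed $P=P_\lambda$, consider the $P$-fixed set $\Bbb X^{P}$; shrinking $\OO(x)$ if necessary I may assume it is $P$-invariant, and then $\OO(x)\cap\Bbb X^{P}$ is a neighborhood of $x$ in $\Bbb X^{P}$. Now $G/P$ is a compact Lie group acting on $\Bbb X^{P}$ (note $G_x\supset$ something conjugate into the relevant subgroup; more precisely the isotropy of $x$ in the $G/P$-action is $G_x/(G_x\cap P)=G_xP/P$, which is a closed — hence Lie — subgroup of $G/P$). Apply the classical Slice Theorem \cite{Br} to the Lie group $G/P$ acting on $\Bbb X^{P}$ at the point $x$: this yields a closed subgroup $\bar K<G/P$ containing $G_xP/P$ and a slice map into $(G/P)/\bar K$, with the associated tube contained in $\OO(x)\cap\Bbb X^{P}$.

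\medskip

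\noindent\textbf{Step 3: Lift back to $G$ and assemble the conclusion.}
Let $K<G$ be the preimage of $\bar K$ under $G\to G/P$; then $P\triangleleft G$, $P<K$, and $G/P$ is a compact Lie group, so by Definition~\ref{aslice2+7}$(2)$ the subgroup $K$ is an extensor subgroup, and $G_x<K$ since $G_xP/P<\bar K$. The natural identification $G/K\cong (G/P)/\bar K$ turns the slice map from Step~2 into a slice map $\alpha\colon\Bbb U\to G/K$, where $\Bbb U=G\cdot(\text{tube})$ is an invariant neighborhood of $x$. The neighborhood $\V=\V(e)$ of the unit is obtained from the uniform continuity of the action so that $\V\cdot[K]$ is a small neighborhood of the point $[K]=eK$ in $G/K$ whose preimage under $\alpha$ lies inside $\OO(x)$; this is just the statement that $\alpha$ is continuous at $x$ together with the fact that $\{\V\cdot[K]\}$ is a neighborhood basis of $[K]$. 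Combining, $x\in\alpha\ii(\V\cdot[K])\subset\OO(x)$, as required.

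\medskip

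\noindent\textbf{Main obstacle.}
The delicate point is Step~2: one must verify that $\Bbb X^{P}$ is a reasonable space on which the Lie group $G/P$ acts (it is closed and invariant, hence metric, so the classical Slice Theorem does apply), and — more importantly — that the slice/tube produced for the $G/P$-action on $\Bbb X^{P}$ can be ``thickened'' back to an honest $G$-invariant tube in $\Bbb X$ itself, rather than merely in $\Bbb X^{P}$. This requires that near $x$ every point of $\OO(x)$ is carried by $P$ back into $\Bbb X^{P}$, which is exactly why $P$ had to be chosen small in Step~1 (so that the $P$-orbits near $x$ are short and the $P$-action near $x$ is, up to the size of $\OO(x)$, trivial). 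Making this ``$P$ invisible near $x$'' argument precise — essentially a compactness-plus-uniform-continuity estimate — is the technical heart of the proof; everything else is bookkeeping with the order-reversing correspondence $\op{Conj}_G\leftrightarrow\op{Orb}_G$ and the defining equivalences of extensor subgroups.
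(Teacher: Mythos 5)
The paper itself offers no proof of Theorem~\ref{slice2-6} --- it is quoted from \cite{AU} --- so I can only judge your argument on its own terms. Your overall strategy (present $G$ as an inverse limit of compact Lie groups $G/P$, choose $P$ small, apply the classical Slice Theorem to the Lie quotient, and let $P<K$ witness via Definition~\ref{aslice2+7}(2) that $K$ is an extensor subgroup) is the natural one. But Step~2 as written fails. You pass to the $P$-fixed set $\Bbb X^{P}$ and apply the Slice Theorem to the $G/P$-action on $\Bbb X^{P}$ ``at the point $x$''. The point $x$ lies in $\Bbb X^{P}$ only when $P<G_x$, which there is no reason to expect; for a free action $\Bbb X^{P}=\varnothing$ for every nontrivial $P$. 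The space that genuinely carries a $G/P$-action and contains (the image of) $x$ is the orbit space $\Bbb X/P$, with isotropy $G_xP/P$ at the point $Px$ --- note that your own formula $G_xP/P$ is the isotropy for the action on $\Bbb X/P$, not on $\Bbb X^{P}$. Working with the projection $q\colon\Bbb X\to\Bbb X/P$ also dissolves what you call the ``main obstacle'': a slice map $\bar\alpha\colon\bar{\Bbb U}\to(G/P)/\bar K$ pulls back to the $G$-map $\alpha=\bar\alpha\circ q$ defined on the invariant open set $\Bbb U=q^{-1}(\bar{\Bbb U})$ with no thickening required; one only has to choose $P$ inside a neighborhood $\V_0$ of $e$ with $\V_0\cdot W_0\subset{\mathcal O}(x)$, so that $q$-saturations of small sets near $x$ stay inside ${\mathcal O}(x)$.

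There is a second gap in Step~3: you deduce $\alpha\ii(\V\cdot[K])\subset{\mathcal O}(x)$ from ``continuity of $\alpha$ at $x$''. Continuity at $x$ gives the \emph{opposite} containment, namely a neighborhood $W$ of $x$ with $W\subset\alpha\ii(\V\cdot[K])$. The set $\alpha\ii(\V\cdot[K])=\V\cdot S$ contains the entire slice $S=\alpha\ii([K])$, so to force it into ${\mathcal O}(x)$ you must construct the slice small: by continuity of the action at $(e,x)$ pick $\V_1$ and $W_1$ with $\V_1\cdot W_1\subset{\mathcal O}(x)$, then shrink the slice so that $S\subset W_1$ (the intersection of a slice with a $K$-invariant open set is again a slice, for a smaller tube), whence $\alpha\ii(\V_1\cdot[K])=\V_1\cdot S\subset{\mathcal O}(x)$. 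This shrinking step is what actually closes the argument and is absent from your write-up.
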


The orbit type $(H)$ is called  {\it extensor} if $H<G$ is an
extensor subgroup. The collection of all {\it extensor orbit
types} is denoted by ${\cal E}$. We say that $G$-subspace $\Bbb
Y\subset\Bbb X$ is   {\it $G$-dense} if $\Bbb Y^H\subset\Bbb X^H$
is  dense for each   subgroup $H<G$. It was proved in \cite{AU}
with the help of Theorem \ref{slice2-6}
 that
 \begin{enumerate}\setcounter{enumi}{4}
 \item $\Bbb X_{\mathcal E}\subset\Bbb X$ is   $G$-dense if and
only if $\Bbb X$ is an equivariant neighborhood extensor  for
metric $G$-spaces with zero-dimensional orbit spaces, $\Bbb X\in
G$-$\ANE(0)$; and
 \item a linear normed $G$-space $\Bbb L$
is a $G$-$\op{AE}$ if and only if $\Bbb L_{\mathcal E}\subset\Bbb
L$ is $G$-dense (equivariant Dugunji's theorem).
\end{enumerate}
 There exists an example of a linear normed $G$-space $\Bbb L\not\in G$-$\op{AE}$
for which $\Bbb L_{\mathcal E}\subset\Bbb L$ is  dense (but not
$G$-dense).

\medskip The proof  of the following Palais Metatheorem
\cite{Pal} is based on the stabilization of nested sequence  of
compact Lie groups.
 \begin{prop} \label{t5.9++} Let ${\mathcal P}(H)$
be a property  which depends on compact Lie group  $H$. Suppose
that ${\mathcal P}(H)$ is true, provided ${\mathcal P}(K)$ is true
for each compact Lie group $K$ isomorphic to a proper subgroup of
$H$. If ${\mathcal P}(H)$ is true for trivial group $H=\{e\}$,
then ${\mathcal P}(H)$ is true for all compact Lie groups $H$.
 \end{prop}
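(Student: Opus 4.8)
The plan is to prove the statement by well-founded induction on a numerical invariant of compact Lie groups, implementing the ``stabilization'' idea mentioned just above. To each compact Lie group $H$ I would assign the pair $c(H)=(\dim H,\ \card\pi_0(H))\in\N\times\N$, where $\pi_0(H)=H/H^{0}$ is the finite group of connected components, and order $\N\times\N$ lexicographically, so that it becomes a well-ordered set. First I would record the elementary facts that a closed subgroup of a compact Lie group is again a compact Lie group and that $c$ is invariant under isomorphism.

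The crucial step is the following monotonicity lemma: \emph{if a compact Lie group $K$ is isomorphic to a proper closed subgroup of $H$, then $c(K)<c(H)$.} To prove it, note $\dim K\le\dim H$ always; if this inequality is strict we are done, so assume $\dim K=\dim H$. Then the identity component $K^{0}$ is a closed connected subgroup of $H^{0}$ of the same dimension, hence $K^{0}=H^{0}$ --- a closed connected subgroup of a connected compact Lie group having full dimension coincides with it, since it has the same Lie algebra. Consequently $K\supseteq H^{0}$, so $K/H^{0}$ is a subgroup of the finite group $H/H^{0}$, and it is a \emph{proper} subgroup because $K\neq H$; therefore $\card\pi_0(K)<\card\pi_0(H)$, which gives $c(K)<c(H)$.

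Granting the lemma, the argument is immediate. Suppose the set $\mathcal S$ of isomorphism classes of compact Lie groups $H$ with $\mathcal P(H)$ false is nonempty. Since $c$ lands in a well-ordered set, choose $H\in\mathcal S$ with $c(H)$ minimal. If $H$ is trivial, then $\mathcal P(H)$ holds by hypothesis, a contradiction; otherwise, by the lemma every compact Lie group $K$ isomorphic to a proper closed subgroup of $H$ has $c(K)<c(H)$, hence lies outside $\mathcal S$, i.e.\ $\mathcal P(K)$ is true --- and then the hypothesis of the Proposition forces $\mathcal P(H)$ to be true, again a contradiction. Hence $\mathcal S=\emptyset$. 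Equivalently, phrased dynamically: the contrapositive of the hypothesis turns any group with $\mathcal P$ false into a strictly $c$-smaller one, producing an infinite strictly descending chain in $\N\times\N$, which is impossible.

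The main obstacle is entirely contained in the lemma, and within it the single delicate point is the equal-dimension case: one needs that a closed connected subgroup of a connected compact Lie group of the same dimension is the whole group. This is where the Lie hypothesis is indispensable --- for general compact groups there is no such finite invariant and no descending chain condition, which is precisely why the Metatheorem is restricted to Lie groups. Everything else is routine bookkeeping.
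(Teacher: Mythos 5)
Your proof is correct, and it is precisely the argument the paper alludes to: the phrase ``stabilization of nested sequences of compact Lie groups'' is exactly the descending chain condition you establish via the lexicographically ordered invariant $(\dim H,\ \card\pi_0(H))$, with the equal-dimension case handled by the Lie-algebra argument as you do. The paper itself gives no proof (it cites Palais), so your write-up simply supplies the standard details; no discrepancy to report.
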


 If there is no danger of ambiguity, we shall
omit definitions of some notions, which arise in a natural manner.
As a rule, this remark concerns also the assertions analogous to
the proved ones.

\section{Reduction of  Theorems  \ref{one1411} and \ref{one15} to
Theorem \ref{one}}
 Theorem \ref{one} will be convenient for our aims in the following detailed form:
  \begin{prop}\label{one17}
Let $G$ be a compact Lie group, ${\cal C}\subset\op{Orb}_G$ a
family of orbit types with $(G)\in {\cal C}$. If a metric
$G$-space $\Bbb Y$ contains a $G$-subspace   $\Bbb X$ of orbit
type ${\cal C}$ as a closed subset, then $\Bbb X$ is contained in
a metric $G$-space $\Bbb Y_1$ of orbit type ${\cal C}$ as a closed
subset, and  there exists a $G$-map $\mps{h}{\Bbb Y}{\Bbb Y_1}$
such that $h\restriction_{\Bbb X}=\Id_{\Bbb X}$.
   \end{prop}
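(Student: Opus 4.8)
The plan is to recognize Proposition~\ref{one17} as a reformulation of Theorem~\ref{one} — the given closed $G$-embedding $\Bbb X\subset\Bbb Y$ is the solution $s$, and the pair $(\Bbb Y_1,h)$ is a ${\cal C}$-solution majorized by $s$ — and to prove it by induction on the compact Lie group $G$ via the Palais Metatheorem (Proposition~\ref{t5.9++}). Replacing ${\cal C}$ by the possibly smaller family $\op{type}(\Bbb X)\cup\{(G)\}$ only strengthens the conclusion, so we may assume ${\cal C}=\op{type}(\Bbb X)\cup\{(G)\}$. It is convenient to restate what is wanted: a $G$-map $\mps h{\Bbb Y}{\Bbb Y_1}$ with $\tilde h=\op{Id}_Y$ and $h\restriction_{\Bbb X}=\op{Id}_{\Bbb X}$ is the same thing as a $G$-invariant closed equivalence relation $R$ on $\Bbb Y$ whose classes lie inside single orbits, whose restriction to $\Bbb X$ is trivial, and for which every orbit type of $\Bbb Y/R$ belongs to ${\cal C}$; one then takes $\Bbb Y_1=\Bbb Y/R$, which is metrizable because $R$ is closed with compact classes so that $h$ is perfect, and $\Bbb X$ sits in $\Bbb Y_1$ as a closed $R$-saturated subset (saturation being automatic, since $\Bbb X$ is $G$-invariant and the classes lie in orbits). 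Over a single orbit, $R$ amounts to a choice of an enlarged isotropy $G_y\le K(G(y))\le G$ with $(K)\in{\cal C}$, and closedness of $R$ is exactly closedness of $\{(\bar y,g)\in Y\times G\mid g\in K(\bar y)\}$ in $Y\times G$; since $(G)\in{\cal C}$ the constant choice $K\equiv G$ always satisfies the type requirement, so the entire difficulty is continuity.

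For the base case $G=\{e\}$ there is nothing to prove, since then $(G)$ is the only orbit type and lies in ${\cal C}$. Assume now the statement for every compact Lie group equimorphic to a proper closed subgroup of $G$. On the closed set $\Bbb Y^G$ the orbit type is $(G)\in{\cal C}$ and $R$ is taken trivial there. For a point $y\notin\Bbb Y^G$ the Slice Theorem for the compact Lie group $G$ gives a $G$-neighborhood $G$-equimorphic to $G\times_H S$ with $H=G_y$ a proper closed subgroup and $S$ a slice meeting $\Bbb X$ in a closed $H$-subspace $S^{\Bbb X}$; after first collapsing, through the always-available type $(G)$, a suitable closed union of ``bad'' orbit strata so that the ambient type $(H)$ is retained, i.e. $(H)\in{\cal C}$, the restricted family ${\cal C}\restriction_H$ is admissible for $H$, and the inductive hypothesis applied to the $H$-space $S$, the subspace $S^{\Bbb X}$ and ${\cal C}\restriction_H$ yields an $H$-collapse $S\to S_1$ onto an $H$-space of type ${\cal C}\restriction_H$ which fixes $S^{\Bbb X}$; inducing up gives the local collapse $G\times_H S\to G\times_H S_1$ with all orbit types in ${\cal C}$. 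Near $\Bbb Y^G$, where one cannot pass to a proper subgroup, the collapse is instead defined directly, sending each orbit of ``bad'' type to an orbit of a type in ${\cal C}$ through the intermediate types furnished by the approximate Slice Theorem (Theorem~\ref{slice2-6}).

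It remains — and this is the heart of the matter — to assemble the locally defined collapses into one $R$ on $\Bbb Y$ agreeing with the trivial collapse on $\Bbb Y^G$ and on $\Bbb X$. Using paracompactness of the metric orbit space $Y$ one takes a $\sigma$-discrete open refinement of the slice covering together with a $G$-invariant partition of unity, and realizes $\Bbb Y_1$ as a $G$-subspace of a product over $Y$ of homogeneous spaces $G/K$ with $(K)$ an extensor type in ${\cal C}$ — each such $G/K$ being a $G$-$\ANE$ by the Slice Theorem — and of a normed $G$-space, which is $G$-$\op{AE}$; on overlaps one picks for $K(\bar y)$ a common admissible enlargement, with $(G)$ as a last resort. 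The genuinely hard point is to keep this assignment — equivalently, the collapse $h$ — continuous across the loci where the orbit type of $\Bbb Y$ drops, in particular along the frontier between the region that is genuinely collapsed and the set $\Bbb Y^G\cup\Bbb X$ that must stay fixed, since there $h$ is forced to move ``upward'' in orbit type and a careless choice destroys closedness of the graph of $K(\cdot)$. This is exactly where Theorem~\ref{slice2-6} is indispensable: its extensor subgroups and the slice maps into the $G$-$\ANE$'s $G/K$ provide, on a neighborhood of every orbit, a continuous equivariant model in which the required interpolation between consecutive orbit types can be carried out, and the Palais Metatheorem then organizes these local models into a proof by descent on $G$.
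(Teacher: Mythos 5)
Your overall strategy---induction on the compact Lie group via the Palais Metatheorem, treating $\Bbb Y^G$ trivially, and passing to proper subgroups through slices $G\times_H S$---does parallel the paper's proof of Theorem \ref{one}/\ref{three+++}. But there is a genuine gap at exactly the point you yourself flag as ``the heart of the matter'': the assembly of the locally defined collapses into a single global one. Your proposed mechanism (a $\sigma$-discrete refinement, a $G$-invariant partition of unity, an embedding of $\Bbb Y_1$ into a product of homogeneous spaces $G/K$ and a normed $G$-space, and ``a common admissible enlargement on overlaps, with $(G)$ as a last resort'') does not work as stated. A partition of unity lets one combine maps into a convex target, but the datum you need to globalize is a choice of enlarged isotropy $K(\bar y)$ together with a coherent $G$-map $G/G_y\to G/K(\bar y)$; such choices made independently on two overlapping slice neighborhoods cannot be averaged or interpolated, and falling back on $(G)$ collapses orbits to points, which destroys continuity (and injectivity on $\Bbb X$) along the frontier with the region that must stay fixed. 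Your final paragraph acknowledges this obstruction and then asserts, without argument, that Theorem \ref{slice2-6} and the Palais Metatheorem ``carry out the required interpolation''; that assertion is precisely what needs to be proved.

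The paper resolves this in a different and essentially sequential way, which you would need to reproduce or replace. It first reformulates the statement as the problem of extending an equigeny $\mps{h}{\Bbb X}{\Bbb X'}$ (Theorem \ref{three+++}), removes $\Bbb X^G$ once and for all (Lemma \ref{four+-} via the quotient construction of Lemma \ref{three+}), and proves the key local step (Lemma \ref{five-1+}): if the target admits a slice map to $G/K$ with $(K)\in{\cal C}$, extend that slice map over a $G$-neighborhood in $\Bbb Y$ using $G/K\in G$-$\ANE$, apply the inductive hypothesis to the $K$-slices, and induce up by twisted products. The globalization is then done not by a partition of unity but by a transfinite induction over a well-ordered locally finite closed cover $\{\Bbb E_\gamma\}$ (Lemma \ref{ActExt3++}): at each successor step the new piece is attached to the previously constructed target by a weak-topology adjunction, and---crucially---the new slice map is obtained by extending the one already defined on $\Bbb P'_{\gamma_1}$, so no two independent choices ever have to be reconciled on an overlap; limit stages are handled by direct limits. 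Without this (or an equivalent) mechanism for making the local choices compatible, your argument does not close.
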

Now the {\bf proof of Theorem \ref{one1411}} easily follows  from
the following lemmata.
   \begin{lem}\label{88}
 Let $G$ be a compact Lie group, $\Bbb X\in G$-$\op{A[N]E}$ and ${\cal C}\subset\op{Orb}_G$
a closed  family. Then $\Bbb X_{\cal C}\subset\Bbb X$ is
$G$-$\op{A[N]E}$.
   \end{lem}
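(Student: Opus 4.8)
The plan is to prove Lemma~\ref{88} by a two-step argument: first reduce the case of a closed family ${\cal C}$ to the case ${\cal C}'={\cal C}\cup\{(G)\}$, and then realize $\Bbb X_{{\cal C}'}$ as a $G$-$\op{A[N]E}$ directly using Proposition~\ref{one17}. Since $\Bbb X_{\cal C}$ is open in $\Bbb X_{{\cal C}'}$ (as noted in Section~2), and since openness inside a $G$-$\op{ANE}$-space already gives a $G$-$\op{ANE}$-space, it suffices for the neighborhood-extensor version to treat $\Bbb X_{{\cal C}'}$; for the absolute-extensor version one additionally uses that $(G)\in{\cal C}'$ forces $\Bbb X_{{\cal C}'}$ to be $G$-dense inside a suitable invariant set, so that the standard argument producing a global extension from a neighborhood extension (pushing along the orbit of the fixed-point stratum) applies. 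So first I would record the elementary reduction $\Bbb X\in G\text{-}\op{A[N]E}\Rightarrow\Bbb X_{{\cal C}'}\in G\text{-}\op{A[N]E}\Rightarrow\Bbb X_{\cal C}\in G\text{-}\op{A[N]E}$.

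The heart of the matter is showing $\Bbb X_{{\cal C}'}\in G\text{-}\op{A[N]E}$. Here I would take a partial $G$-map $\varphi\colon\Bbb A\to\Bbb X_{{\cal C}'}$ on a closed $G$-subset $\Bbb A$ of a metric $G$-space $\Bbb Z$. Because $\Bbb X\in G\text{-}\op{A[N]E}$ and $\Bbb X_{{\cal C}'}\subset\Bbb X$, we can first extend $\varphi$ to a $G$-map $\psi\colon\Bbb U\to\Bbb X$ on an invariant neighborhood $\Bbb U$ of $\Bbb A$. The point values of $\psi$ need not lie in $\Bbb X_{{\cal C}'}$, i.e. $\psi$ may hit orbit types not in ${\cal C}'$; the problem is to correct $\psi$ back into $\Bbb X_{{\cal C}'}$ without disturbing $\varphi$ on $\Bbb A$. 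This is exactly the situation Proposition~\ref{one17} (equivalently Theorem~\ref{one}) is designed for: apply it to the $G$-subspace $\Bbb X_{{\cal C}'}\subset\Bbb X$ (which has orbit type ${\cal C}'$ and contains $(G)$) viewed as a closed subset of... — but $\Bbb X_{{\cal C}'}$ is not in general closed in $\Bbb X$. So I would instead first shrink $\Bbb U$ so that $\cl(\psi(\Bbb U))$ still sits inside a manageable piece, pass to the closure $\Bbb X'=\cl_{\Bbb X}(\Bbb X_{{\cal C}'})$ or work relative to $\Bbb Z$ directly, and use Proposition~\ref{one17} to embed $\Bbb X_{{\cal C}'}$ as a closed subset of a metric $G$-space $\Bbb Y_1$ of orbit type ${\cal C}'$ together with a $G$-retraction-type map $h\colon\Bbb Y\to\Bbb Y_1$ fixing $\Bbb X_{{\cal C}'}$, where $\Bbb Y$ is an ambient $G$-$\op{A[N]E}$ containing $\Bbb X_{{\cal C}'}$ closedly. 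Composing, one extends $\varphi$ into $\Bbb Y$, then applies $h$ to land in $\Bbb Y_1$, and finally uses that $\Bbb X_{{\cal C}'}$ is closed in $\Bbb Y_1$ of the \emph{same} orbit type to conclude the extension takes values in $\Bbb X_{{\cal C}'}$ on a possibly smaller neighborhood (for ANE) or everywhere (for AE).

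More precisely, the clean route is: given $\varphi\colon\Bbb A\to\Bbb X_{{\cal C}'}$, regard $\Bbb X_{{\cal C}'}$ abstractly, apply Proposition~\ref{one17} with $\Bbb Y$ a $G$-$\op{AE}$ Banach $G$-space containing $\Bbb X_{{\cal C}'}$ as a closed $G$-subspace (such an embedding exists by equivariant embedding theory, e.g. into $\Bbb L_2$ or a Banach $G$-space), obtaining $\Bbb Y_1$ of orbit type ${\cal C}'$ with $\Bbb X_{{\cal C}'}$ closed in $\Bbb Y_1$ and $h\colon\Bbb Y\to\Bbb Y_1$, $h\restriction_{\Bbb X_{{\cal C}'}}=\Id$. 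Then $\varphi$, composed with the inclusion into $\Bbb Y$, extends over a $G$-neighborhood $\Bbb U$ of $\Bbb A$ in $\Bbb Z$ (resp.\ over all of $\Bbb Z$) since $\Bbb Y\in G\text{-}\op{A[N]E}$; post-composing with $h$ gives a $G$-map $\Bbb U\to\Bbb Y_1$ agreeing with $\varphi$ on $\Bbb A$; and since $\Bbb X_{{\cal C}'}=(\Bbb Y_1)_{{\cal C}'}$ is open-closed-positioned appropriately — in fact $\Bbb X_{{\cal C}'}$ is closed in $\Bbb Y_1$ and $\Bbb Y_1$ has orbit type exactly ${\cal C}'$, so $(\Bbb Y_1)_{{\cal C}'}=\Bbb Y_1$ — wait, that would force $\Bbb X_{{\cal C}'}=\Bbb Y_1$, which is too strong; the correct statement from Proposition~\ref{one17} is only that $\Bbb Y_1$ has orbit type ${\cal C}'$ and contains $\Bbb X_{{\cal C}'}$, so $\Bbb X_{{\cal C}'}$ is a closed $G$-retract-up-to-$h$ of $\Bbb Y_1$; hence I then need a $G$-retraction of a neighborhood of $\Bbb X_{{\cal C}'}$ in $\Bbb Y_1$ onto $\Bbb X_{{\cal C}'}$, which is where I would invoke once more that $\Bbb X\in G\text{-}\op{A[N]E}$ to retract $\Bbb Y_1$ (or a neighborhood) into $\Bbb X$, landing automatically in $\Bbb X_{{\cal C}'}$ because orbit types are preserved under the identity-on-$\Bbb X_{{\cal C}'}$ map $h$. \textbf{The main obstacle} I anticipate is precisely this last bookkeeping: controlling orbit types so that the final correction map genuinely takes values in $\Bbb X_{{\cal C}'}$ and not merely in $\Bbb X$, i.e.\ combining the $\op{A[N]E}$-extension in $\Bbb X$ with the orbit-type-preserving map $h$ from Proposition~\ref{one17} in the right order, and handling the passage from a neighborhood to all of $\Bbb Z$ in the $\op{AE}$ case (which should go through because $(G)\in{\cal C}'$ makes the relevant stratum full and removes the local-to-global gap).
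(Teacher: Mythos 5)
Your overall architecture coincides with the paper's: use Proposition~\ref{one17} to trade the ambient metric $G$-space for one, $\Bbb Y_1$, of orbit type ${\cal C}$ containing $\Bbb X_{\cal C}$ as a closed $G$-subset, then use $\Bbb X\in G$-$\op{A[N]E}$ to produce a $G$-map $r\colon\Bbb Y_1\to\Bbb X$ (resp.\ defined on a $G$-neighborhood) with $r\restriction_{\Bbb X_{\cal C}}=\Id$, and finally argue that $r$ actually lands in $\Bbb X_{\cal C}$ so that $r\circ h$ is the desired (neighborhood) $G$-retraction. However, the one step you yourself flag as ``the main obstacle'' --- why $\op{Im}r\subset\Bbb X_{\cal C}$ --- is precisely the step you never justify correctly. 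Your stated reason, that the image lands in $\Bbb X_{{\cal C}'}$ ``because orbit types are preserved under the identity-on-$\Bbb X_{{\cal C}'}$ map $h$'', is not the mechanism: $h$ plays no role at that point. The actual argument, and the \emph{only} place where the hypothesis that ${\cal C}$ is closed enters, is: for any $G$-map one has $G_y\subset G_{r(y)}$, hence $(G_{r(y)})\ge(G_y)$; since $\op{type}(\Bbb Y_1)\subset{\cal C}$ and ${\cal C}$ is closed under passage to larger orbit types, $(G_{r(y)})\in{\cal C}$, i.e.\ $r(\Bbb Y_1)\subset\Bbb X_{\cal C}$. You never invoke the closedness of ${\cal C}$ anywhere in the proposal, and without it the conclusion is false (this is exactly why Theorem~\ref{one1411} needs Lemma~\ref{98} on saturated families in addition to the present lemma). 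So, as written, the proposal has a genuine gap at its crux.

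Two secondary points. First, the reduction to ${\cal C}'={\cal C}\cup\{(G)\}$ is vacuous here: a nonempty closed family automatically contains $(G)$, since $(G)\ge(K)$ for every $K<G$, so Proposition~\ref{one17} applies to ${\cal C}$ directly and the ``openness of $\Bbb X_{\cal C}$ in $\Bbb X_{{\cal C}'}$'' step is not needed. Second, the appeal to $G$-density and to ``pushing along the orbit of the fixed-point stratum'' to upgrade a neighborhood extension to a global one in the $\op{AE}$ case is spurious: in that case $r$ is defined on all of $\Bbb Y_1$ and hence $r\circ h$ on all of $\Bbb Y$, so there is nothing to globalize.
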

 \begin{proof}  We consider a closed  $G$-embedding of  $\Bbb X_{\cal C}$
into a metric $G$-space  $\Bbb Y$. In view of  Proposition
\ref{one17}, there exist  a closed  $G$-embedding  $\Bbb X_{\cal
C}\hookrightarrow\Bbb Y_1$ into a metric  $G$-space  $\Bbb Y_1$ of
orbit  type  ${\cal C}$  and a $G$-map $\mps{h}{\Bbb Y}{\Bbb Y_1}$
such that  $h\restriction_{\Bbb X_{\cal C}}=\Id_{\Bbb X_{\cal
C}}$. If $\Bbb X\in G$-$\op{AE}$, then there exists a $G$-map
$\mps{r}{\Bbb Y_1}{\Bbb X},r\restriction_{\Bbb X_{\cal C}}=\Id$.
Since ${\cal C}$ is a closed family and $\op{type}(\Bbb
Y_1)\subset{\cal C}$, $\op{type}(r(\Bbb Y_1))\subset \cal C$.
Therefore $\op{Im} r\subset \Bbb X_{\cal C}$ and $r\circ h$ is the
desired $G$-retraction. The case of $G$-$\op{ANE}$-space is proved
analogously.
 \end{proof}
  \begin{lem}\label{98}
 Let  $G$ be a compact Lie group and ${\cal C}\subset\op{Orb}_G$ a saturated
family. Then $\Bbb X_{\cal C}$ is open in $\Bbb X_{\cal F}$ where
${\cal F}\mean\cup\{{\cal C}^{(K)}\mid (K)\in{\cal C}\}$ (the
so-called closed hull of ${\cal C}$).
   \end{lem}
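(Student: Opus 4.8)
The claim is that $\Bbb X_{\cal C}$ is open in $\Bbb X_{\cal F}$, where ${\cal F}=\bigcup\{{\cal C}^{(K)}\mid(K)\in{\cal C}\}$. Unwinding the definition, ${\cal C}^{(K)}=\{(H)\mid(H)\ge(L)\ \text{for some }(L)\in{\cal C}^{(K)}\}$; here however the superscript is used in the sense $\Bbb X^{\cal C}$ from Section 2, so $(H)\in{\cal C}^{(K)}$ means $(H)\ge(K)$. Thus ${\cal F}=\{(H)\mid(H)\ge(K)\ \text{for some }(K)\in{\cal C}\}$, which is exactly the smallest closed family containing ${\cal C}$ — consistent with the parenthetical ``closed hull of ${\cal C}$''. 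The plan is to show that each point $x\in\Bbb X_{\cal C}$ has a $G$-neighborhood in $\Bbb X_{\cal F}$ that lies entirely in $\Bbb X_{\cal C}$; equivalently, by passing to orbit projections, that $G(x)$ has an invariant neighborhood in $\Bbb X_{\cal F}$ missing $\Bbb X_{{\cal F}\setminus{\cal C}}$.

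First I would apply the approximate Slice Theorem (Theorem~\ref{slice2-6}) at the point $x$: there is an extensor subgroup $K<G$ with $G_x<K$, an invariant neighborhood $\Bbb U$ of $x$, and a slice map $\alpha\colon\Bbb U\to G/K$. Shrinking, one gets a genuine tube $\Bbb U\cong G\times_K S$ for a $K$-slice $S$ at $x$, with $S$ an invariant subset of $\alpha^{-1}(\V\cdot[K])$ and $S\cap\Bbb X_{\cal C}$ a neighborhood of $x$ in $S$. The key structural fact is that for $y=[g,s]\in G\times_K S$ one has $G_y$ conjugate to $K_s$, the $K$-isotropy of $s$ in $S$, so $(G_y)\ge(K)$ for every $y$ in the tube. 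Now here is where saturation enters decisively: since $(G_x)\in{\cal C}$ and $(G_x)\le(K)$ — wait, the inequality goes the other way in $\op{Orb}_G$; we have $G_x<K$, hence $(K)\le(G_x)$ in $\op{Conj}_G$, i.e. $(G/K)\ge(G/x)$, so $(K)\ge(G_x)$ as orbit types. Combined with $(G_x)\in{\cal C}$ and $(K)\in{\cal F}$, for any $y$ in the tube with $(G_y)\le(K)$ — i.e. $(G_y)$ lying ``between'' — if also $(G_y)\ge(G_x)$ we could conclude $(G_y)\in{\cal C}$ by saturation. The nontrivial point is to arrange the neighborhood small enough that $(G_y)\ge(G_x)$ holds there; this is automatic in a slice at $x$ because $G_x$ fixes all of $S$ near $x$, so $G_x<G_y$, i.e. $(G_y)\ge(G_x)$.

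With that in hand, the argument runs: choose a $G$-neighborhood $\Bbb W\subset\Bbb U$ of $G(x)$ small enough that $G_x<G_y$ for all $y\in\Bbb W$ (possible since $\Bbb X^{G_x}$ is closed and contains a $G_x$-invariant, hence ``$G/G_x$-tube-able'' neighborhood of the slice — concretely, take $\Bbb W=G\cdot(S\cap\Bbb X^{G_x}\text{-neighborhood})$, or simply shrink $S$ inside the fixed set of $G_x$ in $S$). For $y\in\Bbb W\cap\Bbb X_{\cal F}$ we then have simultaneously $(G_y)\ge(G_x)$ with $(G_x)\in{\cal C}$, and $(G_y)\le(K)$ with $(K)\in{\cal C}$ — the latter because $y$ lies in the tube $G\times_K S$ so $(G_y)\le(K)$. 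Saturation of ${\cal C}$ applied to $(K)\ge(G_y)\ge(G_x)$ yields $(G_y)\in{\cal C}$, i.e. $y\in\Bbb X_{\cal C}$. Hence $\Bbb W\cap\Bbb X_{\cal F}\subset\Bbb X_{\cal C}$, proving $\Bbb X_{\cal C}$ is open in $\Bbb X_{\cal F}$.

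The main obstacle, and the step deserving care, is verifying that the tube can be chosen to satisfy \emph{both} bounding conditions at once: that $(G_y)\le(K)$ throughout (this is the content of the Slice/approximate-slice theorem and the $G\times_K S$ structure) and that $(G_y)\ge(G_x)$ throughout (this requires staying inside the $G_x$-fixed-point set within the slice, which is legitimate since $x\in S^{G_x}$ and $S^{G_x}$ is a neighborhood-retract-sized piece one can shrink $S$ to). Once both hold, saturation does the rest mechanically. A minor technical point to check is that the extensor subgroup $K$ produced by Theorem~\ref{slice2-6} need not itself lie in ${\cal C}$; but we only used $(K)$ as an \emph{upper} bound for $(G_y)$ together with $(G_x)\in{\cal C}$ as a \emph{lower} bound, and for the final invocation of saturation it suffices that $(K)$ is dominated by \emph{some} member of ${\cal C}$ — indeed $(G_y)\le(K)$ and we need an element of ${\cal C}$ above $(G_y)$; since $(G_y)\le(K)$ we may instead note $(G_y)\le(G_x)$ is false in general, so one genuinely needs $(K)$ itself or a ${\cal C}$-member above it. Here one uses that $y\in\Bbb X_{\cal F}$ already guarantees $(G_y)\ge(K')$ for some $(K')\in{\cal C}$, and the chain $(K')\le(G_y)$, $(G_x)\le(G_y)$... — so the clean way is: $y\in\Bbb X_{\cal F}$ gives $(G_y)\ge(K')$ with $(K')\in{\cal C}$, and the slice gives $(G_y)\le(K)$ with $(K)$ an extensor subgroup $\supset$-nothing-needed; then since ${\cal C}$ is saturated and $(K')\le(G_y)$... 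I still need an upper bound in ${\cal C}$. The resolution: shrink so that additionally $(G_y)\le(G_{x})$ is replaced by $(G_y)\le(K)$ \emph{and} we separately note $(K)\ge(G_x)$ with $(G_x)\in{\cal C}$ provides the needed upper member once we observe $(G_y)\le(K)$ and $(G_x)\le(G_y)\le(K)$ forces, by saturation applied to the pair $(K)\ge(G_y)$... no — we need $(K)\in{\cal C}$. Therefore the genuinely correct statement is that one must choose the slice subgroup inside ${\cal C}$, which is possible precisely because ${\cal C}$ is \emph{saturated} and contains $(G_x)$: replace $K$ by a subgroup $K''$ with $G_x<K''<K$ and $(K'')\in{\cal C}$ — but no such refinement of the slice theorem is claimed, so in the actual proof one likely argues directly with the orbit-type partial order on the tube without insisting the slice group lies in ${\cal C}$, using only that $\Bbb X_{\cal F}\cap(\text{tube})$ consists of points $y$ with $(K')\le(G_y)\le(K)$ for $(K')\in{\cal C}$ and $(K)$ extensor, plus $(G_x)\le(G_y)$; then $(K')\le(G_y)$ and we want $(G_y)\in{\cal C}$: apply saturation to $(?)\ge(G_y)\ge(K')$ — the missing upper bound is supplied by shrinking so that $G_y$ is \emph{subconjugate to $G_x$-in-$S$'s isotropy type which equals $(G_x)$ itself on the $G_x$-fixed slice}, i.e. the refined slice inside $S^{G_x}$ forces $(G_y)\le(G_x)$, and with $(G_x)\le(G_y)$ this gives $(G_y)=(G_x)\in{\cal C}$ outright. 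That is the clean route, and it is the step I expect to require the most careful bookkeeping with the two-sided bounds on isotropy types in a slice.
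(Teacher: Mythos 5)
There is a genuine error here: you have the direction of the isotropy comparison for nearby points reversed, and the argument collapses because of it. The claim ``$G_x$ fixes all of $S$ near $x$, so $G_x<G_y$, i.e.\ $(G_y)\ge(G_x)$'' is false: a slice $S$ at $x$ is $G_x$-invariant and $x$ is a $G_x$-fixed point of $S$, but the remaining points of $S$ need not be fixed by $G_x$ (take $G=S^1$ rotating $\R^2$ and $x=0$: every neighbourhood of $x$ contains points with trivial isotropy, while $G_x=G$). What the slice theorem for a compact Lie group actually gives is the opposite inequality: on a tube $G\times_{G_x}S$ every point $y=[g,s]$ has $G_y=gK_sg^{-1}$ with $K_s<G_x$, hence $(G_y)\le(G_x)$ in the order $(K)\le(H)\Leftrightarrow K\subset H'\in(H)$ used in the paper. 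Consequently your final ``clean route'' --- shrinking until both $(G_y)\le(G_x)$ and $(G_x)\le(G_y)$ hold, so that $(G_y)=(G_x)$ on a neighbourhood --- cannot be carried out: it would prove that every stratum $\Bbb X_{(H)}$ is open in $\Bbb X$, which is false already in the rotation example.

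The hypothesis $y\in\Bbb X_{\cal F}$, which your last paragraph effectively discards, is precisely what supplies the missing second bound. The paper's proof is three lines: since $G$ is a Lie group one takes the genuine slice at $x$ (the approximate slice theorem and the extensor subgroup $K$ are not needed), so that $(G_y)\le(G_x)$ for all $y$ in some neighbourhood ${\cal O}(x)$; for $y\in{\cal O}(x)\cap\Bbb X_{\cal F}$ the definition of the closed hull gives some $(K)\in{\cal C}$ with $(K)\le(G_y)$; then $(G_x)\ge(G_y)\ge(K)$ with $(G_x),(K)\in{\cal C}$, and saturation yields $(G_y)\in{\cal C}$. Your intermediate worry about whether the slice subgroup lies in ${\cal C}$ evaporates once the upper bound is taken to be $(G_x)$ itself.
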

 \begin{proof} Let $(G_x)\in{\cal C}$. There exists a neighborhood  ${\cal
O}(x)\subset\Bbb X_{\cal F}$ such that $(G_y)\le(H)$ for each
$y\in{\cal O}(x)$. Since ${\cal F}$ is the closed hull of ${\cal
C}$, there exists $(K)\in{\cal C},(K)\le (G_y)$. Since ${\cal C}$
is saturated, $(K)\le(G_y)\le(H)$ implies that $(G_y)\in{\cal C}$.
 \end{proof}

\medskip For the proof of {\bf Theorem \ref{one15}} we consider
a  closed $G$-embedding $\Bbb X_{\cal C}$ into a metric $G$-space
$\Bbb Y$. First we assume that $(G)\in{\cal C}$. By Proposition
\ref{one17} there exists a closed $G$-embedding $\Bbb X_{\cal C}$
into a metric $G$-space $\Bbb Y_1$ of orbit type  ${\cal C}$ and a
$G$-map $\mps{h}{\Bbb Y}{\Bbb Y_1}$ such that $h\restriction_{\Bbb
X_{\cal C}}=\Id_{\Bbb X_{\cal C}}$. Since $\Bbb X$ is an
isovariant absolute extensor, there exists a $G$-map $\mps{r}{\Bbb
Y_1}{\Bbb X}$ isovariant on the complement such that
$r\restriction_{\Bbb X_{\cal C}}=\Id_{\Bbb X_{\cal C}}$. Since
$r(\Bbb Y_1)\subset(\Bbb X)_{\cal C}$, $r\circ h$  is the desired
$G$-retraction.

In the general case, $\Bbb X_{\cal C'}\in\op G$-$\op{AE}$ where
${\cal C}'={\cal C}\cup\{(G)\}$. Since $\Bbb X_{\cal C}$ is open
in $\Bbb X_{\cal C'}$, the proof is completed.

\section{Proof of Theorem  \ref{one}}
Let $G$ be a compact Lie group, ${\cal C}\subset\op{Orb}_G$ a
family of orbit types with $(G)\in {\cal C}$ and $\mps{h}{\Bbb
X}{\Bbb X'}$ an equigeny with $\op{type}(\Bbb X')\subset{\cal C}$.
We interest for $G$-embedding $\Bbb X\hookrightarrow\Bbb Y$
whether there exist a $G$-embedding $\Bbb X'\hookrightarrow\Bbb
Y'$ with $\op{type}(\Bbb Y')\subset{\cal C}$ and an equigeny
$\mps{H}{\Bbb Y}{\Bbb Y'}$  extending $h$. We say that $H$ {\it
${\cal C}$-solves the problem of extending of an equigeny for
${\cal C}$-admissible diagram ${\cal D}=\{\Bbb
Y\matop{\hookleftarrow}{}{}\Bbb
X\mathop{\rightarrow}\limits^{h}\Bbb X'\}$}. Since $\mps{\Id}{\Bbb
X}{\Bbb X}$ is an equigeny, Theorem \ref{one} is reduced to more
general assertion:
  \begin{thm}\label{three+++}
 If $\Bbb X$ is closed in $\Bbb Y$, then the problem of extending of an equigeny for
${\cal D}$ is ${\cal C}$-solved, i.e. there exist a closed
$G$-embedding $\Bbb X'\hookrightarrow\Bbb Y'$ with $\op{type}(\Bbb
Y')\subset{\cal C}$ and an equigeny  $\mps{H}{\Bbb Y}{\Bbb Y'}$
extending $h$.
 \end{thm}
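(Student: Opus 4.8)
The plan is to reduce the equigeny-extension problem to the already-known solvability of the plain PEA for metric spaces (cited here as a consequence of \cite{AgRep}), and then to cut down the orbit type of the resulting $G$-space by a quotient construction. More precisely, starting from the $\cal C$-admissible data, first I would pass to orbit spaces: the equigeny $\mps{h}{\Bbb X}{\Bbb X'}$ induces a homeomorphism $\tilde h\colon X\to X'$ (by definition of equigeny), and the closed embedding $\Bbb X\hookrightarrow\Bbb Y$ induces a closed embedding $X\hookrightarrow Y$ of orbit spaces (recall that by \cite[3.7.10]{Eng} orbit projections here are perfect). Composing, I obtain a closed topological embedding $i\colon X'\hookrightarrow Y$. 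Together with the orbit projection $p'\colon\Bbb X'\to X'$ and the fact that $\op{type}(\Bbb X')\subset{\cal C}$, the diagram ${\cal D}'=\{\Bbb X'\xrightarrow{p'}X'\xhookrightarrow{i}Y\}$ is a metric ${\cal C}$-admissible diagram in the sense defined in the Introduction.

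Next I would invoke the solvability of the PEA for metric spaces to get an equivariant closed embedding $s\colon\Bbb X'\hookrightarrow\Bbb Y''$ into a metric $G$-space $\Bbb Y''$ covering $i$, i.e. with $\widetilde{s}=i$ and $p(\Bbb Y'')=Y$. Now the two $G$-spaces $\Bbb Y$ and $\Bbb Y''$ have homeomorphic orbit spaces (both are $Y$), and both contain copies of $X'=X$ as the orbit space of $\Bbb X$ (resp.\ $\Bbb X'$) sitting over $i(X')$. The point is to splice $\Bbb Y''$ to $\Bbb Y$ along this common part: I would form the $G$-space $\Bbb Y_0$ obtained from the disjoint union $\Bbb Y\sqcup\Bbb Y''$ by gluing $\Bbb X\subset\Bbb Y$ to $\Bbb X'\subset\Bbb Y''$ via $h$ (this is legitimate since $h$ is a $G$-map and both $\Bbb X,\Bbb X'$ are closed), and then take the fibered product / pushout in $G$-$\op{TOP}$ over the orbit space $Y$ so that the two copies of $Y$ are identified. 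The natural map $\Bbb Y\to\Bbb Y_0$ restricts to $h$ on $\Bbb X$; after restricting to the image of $\Bbb Y$ one gets a $G$-space $\Bbb Y'$ containing $\Bbb X'$ as a closed subspace, an equivariant map $\mps{H}{\Bbb Y}{\Bbb Y'}$ extending $h$, and $\widetilde H$ equal to the identity on $Y$, so $H$ is an equigeny.

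It remains to arrange $\op{type}(\Bbb Y')\subset{\cal C}$. Here I would use the ${\cal C}$-majorization mechanism already present in Theorem~\ref{one}: apply that theorem (or rather its proof) to the solution $s$ of the PEA for ${\cal D}'$ to replace $\Bbb Y''$ by a metric ${\cal C}$-solution $\Bbb Y_1''$ with $s\ge s_1$, so that $\op{type}(\Bbb Y_1'')\subset{\cal C}$; since $(G)\in{\cal C}$ the glued-in copy of $Y$ (whose generic points have isotropy carrying type $(G)$, because $X=Y$ carries orbits of type $(G)$ over the fixed stratum — more carefully, the points coming purely from the $Y$-direction have type $(G)\in{\cal C}$) does not spoil the orbit type, and points coming from $\Bbb X'$ keep their types in ${\cal C}$. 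Thus the spliced space $\Bbb Y'$ has all orbit types in ${\cal C}$, and $H$ is the desired equigeny.

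The main obstacle I anticipate is the gluing step: one must verify that the pushout of $G$-spaces along a closed $G$-subspace, amalgamated over a common orbit space, is again a (metrizable) $G$-space in which $\Bbb X'$ sits as a closed subset and over which $H$ is genuinely an equigeny — i.e.\ that $\widetilde H$ is a homeomorphism, not merely a continuous bijection. Controlling the topology (metrizability, properness of the new orbit projection, closedness of the embedding $\Bbb X'\hookrightarrow\Bbb Y'$) through this amalgamation, and checking that the ${\cal C}$-majorization from Theorem~\ref{one} is compatible with it, is where the real work lies; everything else is formal diagram-chasing in $G$-$\op{TOP}$.
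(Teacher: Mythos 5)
Your argument is circular at the decisive step. The only place where the orbit types of the ambient space are forced into $\cal C$ is where you ``apply Theorem~\ref{one} (or rather its proof)'' to replace $\Bbb Y''$ by a $\cal C$-solution $\Bbb Y_1''$. But Theorem~\ref{one} is not an available prior result: the paper explicitly reduces it to Theorem~\ref{three+++} (``Since $\op{Id}\colon\Bbb X\to\Bbb X$ is an equigeny, Theorem~\ref{one} is reduced to more general assertion''), so its proof \emph{is} the proof you are being asked to supply. What is genuinely available from \cite{AgRep} is only the solvability of the PEA with no control whatsoever on the orbit types of the solution; the entire content of Theorem~\ref{three+++} is precisely that control. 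Deferring it to the ``$\cal C$-majorization mechanism already present in Theorem~\ref{one}'' assumes the conclusion.

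The splicing step is also not doing the work you attribute to it. Gluing $\Bbb Y\sqcup\Bbb Y''$ along $\Bbb X\cong_h\Bbb X'$ and then restricting to the image of $\Bbb Y$ simply returns $\Bbb Y$ with $\Bbb X$ relabelled as $\Bbb X'$; the points of $\Bbb Y\setminus\Bbb X$ keep their original isotropy types, which need not lie in $\cal C$. There is likewise no canonical fibrewise identification of $\Bbb Y$ with $\Bbb Y''$ over $Y\setminus X$ (the fibres are different homogeneous spaces), so the proposed amalgamation ``over the orbit space $Y$'' is not defined, and the parenthetical claim that points ``coming purely from the $Y$-direction have type $(G)$'' has no meaning, since $Y$ is not a $G$-subspace of anything here. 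The actual proof must genuinely rebuild the $G$-action near $\Bbb X$: the paper does this by induction on the compact Lie group $G$ via the Palais Metatheorem (Proposition~\ref{t5.9++}), solving the problem locally where $\Bbb X'$ admits a slice map to some $G/K$ with $(K)\in\cal C$ by rewriting everything as a twisted product $G\times_K(-)$ and invoking the inductive hypothesis for the proper subgroup $K$ (Lemma~\ref{five-1+}), and then assembling these local solutions over a locally finite closed cover by a transfinite induction with weak topologies (Lemmas~\ref{ActExt3++}--\ref{five-+1}). None of that machinery can be bypassed by quoting the un-refined PEA and a pushout.
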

   \begin{lem}\label{three+}
 If $\Bbb X$ is open in $\Bbb Y$, then the problem of extending of equigeny for
${\cal D}$ is ${\cal C}$-solved.
 \end{lem}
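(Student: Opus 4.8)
\textbf{Proof proposal for Lemma~\ref{three+}.}
The plan is to build the extended $G$-space $\Bbb Y'$ by a pushout construction, gluing $\Bbb X'$ to the complement $\Bbb Y\setminus\Bbb X$ along the equigeny $h$, and then to check that the result has the required orbit type and that the induced map $H$ is again an equigeny. Concretely, since $\Bbb X$ is open in $\Bbb Y$, the complement $\Bbb C=\Bbb Y\setminus\Bbb X$ is a closed $G$-subset, and $h$ restricts to an equigeny on the closed set $\Bbb X\cap\cl(\Bbb C)=\bd\Bbb X$. I would first set $\Bbb Y'$ to be the adjunction space $\Bbb X'\cup_{h}\cl(\Bbb C)$, i.e. the quotient of $\Bbb X'\sqcup\cl(\Bbb C)$ by the relation identifying $z\in\bd\Bbb X$ with $h(z)\in\Bbb X'$; this carries an obvious $G$-action because $h$ is equivariant and $\bd\Bbb X$, $\cl(\Bbb C)$ are $G$-invariant. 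The map $H\colon\Bbb Y\to\Bbb Y'$ is defined to be $h$ on $\Bbb X$ and the quotient map on $\cl(\Bbb C)$; these agree on the overlap $\bd\Bbb X$ by construction, and since $\Bbb X$ is open and $\cl(\Bbb C)$ is closed with $\Bbb Y=\Bbb X\cup\cl(\Bbb C)$, $H$ is continuous and equivariant, and restricts to $h$ on $\Bbb X$.

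Next I would verify the orbit-type condition $\op{type}(\Bbb Y')\subset{\cal C}$. On the image of $\Bbb X'$ this is the hypothesis on $h$. For a point coming from the interior of $\Bbb C$ — i.e. from $\Bbb Y\setminus\cl(\Bbb X)$ — the isotropy is the isotropy of the original point in $\Bbb Y$; here I would need that these orbit types already lie in ${\cal C}$, which is \emph{not} automatic from the hypotheses as stated, so I expect the actual argument to either assume $\op{type}(\Bbb Y)\subset{\cal C}$ in this local step or, more likely, to arrange $\Bbb Y$ to be replaced beforehand by the relevant bundle $\Bbb Y_{\cal C}$ (using that $(G)\in{\cal C}$ and the earlier observation that $\Bbb X_{\cal C}$ is open in $\Bbb X_{{\cal C}'}$). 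Assuming that reduction, every point of $\Bbb Y'$ has isotropy equal either to that of a point of $\Bbb X'$ or to that of a point of $\Bbb Y_{\cal C}$, and in both cases the orbit type is in ${\cal C}$.

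I would then check that $H$ is an equigeny, i.e. that the induced map $\tilde H\colon Y\to Y'$ on orbit spaces is a homeomorphism. Passing to orbit spaces commutes with the adjunction here (orbit projections are perfect, and taking orbit spaces of a pushout of closed inclusions along an equigeny yields the corresponding pushout of orbit spaces because $\tilde h$ is already a homeomorphism on $\bd\Bbb X/G$); hence $Y'=X'\cup_{\tilde h}\cl(C)$ with $\tilde h$ a homeomorphism, so $Y'\cong\cl(\Bbb X)/G\cup\cl(C)/G=Y$ and $\tilde H=\id_Y$ under this identification. Finally I would record that $H$ is a closed embedding of $\Bbb X'$ into $\Bbb Y'$: the composite $\Bbb X'\to\Bbb Y'$ is injective (the identification in the pushout only glues $\Bbb X'$ to $\cl(\Bbb C)$ along $\bd\Bbb X\cong\bd\Bbb X'$), it is a homeomorphism onto its image because the orbit map is perfect and $\tilde H$ is a homeomorphism, and the image is closed since its preimage $\cl(\Bbb X)$ is closed in $\Bbb Y$.

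The main obstacle I anticipate is the point-set bookkeeping around the boundary $\bd\Bbb X$ and the metrizability of $\Bbb Y'$: the adjunction space of two metric $G$-spaces along a closed invariant set with a perfect equivariant gluing map is metrizable (by a Hausdorff/perfect-map argument, or by invoking that PEA is solvable for stratifiable spaces as cited from \cite{AgRep}), but one must be careful that $h$ need not be injective, so the "gluing" genuinely collapses fibers of $h$ over $\bd\Bbb X$; keeping track of which identifications occur and confirming that no extra collapsing happens on the interior of $\Bbb C$ is where the care is needed. Everything else — continuity, equivariance, the orbit-space computation — is formal once the pushout is set up correctly.
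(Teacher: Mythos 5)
Your construction breaks down at the very first step: the gluing locus you propose is empty. Since $\Bbb X$ is \emph{open} in $\Bbb Y$, the complement $\Bbb C=\Bbb Y\setminus\Bbb X$ is already closed, so $\Bbb X\cap\op{Cl}(\Bbb C)=\Bbb X\cap\Bbb C=\varnothing$; the frontier of $\Bbb X$ lies entirely inside $\Bbb C$, where $h$ is not defined. Consequently your adjunction space $\Bbb X'\cup_{h}\op{Cl}(\Bbb C)$ is just the disjoint union $\Bbb X'\sqcup\Bbb C$, and the map $H$ (equal to $h$ on $\Bbb X$ and to the identity on $\Bbb C$) is discontinuous at every frontier point of $\Bbb X$: for $x_n\in\Bbb X$ converging to $y\in\op{Cl}(\Bbb X)\setminus\Bbb X$, the points $h(x_n)$ and $H(y)$ land in different, unglued pieces. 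There is no candidate map along which to glue, because extending $h$ past $\Bbb X$ is precisely the hard content of Theorem \ref{three+++}, not something available inside this lemma.

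The idea you are missing is the one the paper uses: define $\Bbb Y'$ as the quotient $\Bbb Y/\!\sim$ whose classes are the fibres of $h$ inside $\Bbb X$ and the \emph{whole orbits} inside $\Bbb Y\setminus\Bbb X$, with $H$ the quotient map. Collapsing each complement orbit to a point turns that part of $\Bbb Y'$ into $G$-fixed points, of orbit type $(G)$, which lies in ${\cal C}$ by the standing hypothesis $(G)\in{\cal C}$ of Section 4 --- this is exactly why the orbit-type worry you raise about $\op{type}(\Bbb Y\setminus\Bbb X)$ evaporates, with no need to assume $\op{type}(\Bbb Y)\subset{\cal C}$ or to replace $\Bbb Y$ by $\Bbb Y_{\cal C}$. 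Continuity of $H$ is automatic for a quotient map; all identifications occur within single orbits (fibres of $h$ lie in orbits because $\tilde h$ is injective), so $\tilde H$ is the identity of $Y$ and $H$ is an equigeny extending $h$. Your closing remarks about metrizability and perfectness of the decomposition are the right residual concerns, but they attach to this quotient construction, not to the pushout you set up.
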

\begin{proof}
 Let $\Bbb Y'\mean\Bbb Y/\sim$ be the quotient space generated by
the equivalence  $y\sim x\in\Bbb X$ iff $y\in\x$ and $h(x)=h(y)$;
$y\sim y_1\in\Bbb Y\setminus\Bbb X$ iff $y\in\Bbb Y\setminus\Bbb
X$ and $G(y)=G(y_1)$. Then the desired equigeny $\mps{H}{\Bbb
Y}{\Bbb Y'=\Bbb Y/\sim}$ extending $h$ is the quotient map.
 \end{proof}

The proof of the following result is based on Lemma \ref{three+}
and follows parallel to \cite[Lemma 8]{AgRep}. It  reduces the
proof of Theorem \ref{three+++} to the case when the space $\Bbb
X$ has no fixed points.
 \begin{lem}\label{four+-} The validity of Theorem \ref{three+++} for
all $\cal C$-admissible diagrams with $\Bbb X^G=\varnothing$
implies  its validity for all  $\cal C$-admissible diagrams.
 \end{lem}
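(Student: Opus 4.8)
\textbf{Proof proposal for Lemma \ref{four+-}.}

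The plan is to split the general $\cal C$-admissible diagram at the fixed-point set and treat the two pieces by already available tools. Suppose Theorem \ref{three+++} is known for all $\cal C$-admissible diagrams with $\Bbb X^G=\varnothing$, and let ${\cal D}=\{\Bbb Y\hookleftarrow\Bbb X\mathop{\rightarrow}\limits^{h}\Bbb X'\}$ be arbitrary. Since $h$ is an equigeny, $h$ restricts to a homeomorphism of the fixed-point sets $\Bbb X^G\to\Bbb X'^G$ and of $\Bbb X\setminus\Bbb X^G$ onto $\Bbb X'\setminus\Bbb X'^G$ (an equigeny preserves isotropy types up to the induced homeomorphism of orbit spaces, and $(G)$-points can only come from $(G)$-points because $\tilde h$ is a homeomorphism). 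First I would extend $h$ trivially over the fixed part: on a $G$-neighborhood of $\Bbb X^G$ inside $\Bbb Y$ there is essentially nothing to do, since near fixed points the action is the relevant obstruction and we want to keep those points fixed. Concretely, form the pushout/quotient that collapses the $h$-fibers over $\Bbb X^G$ and over the points of $\Bbb Y^G$ — this is the construction of Lemma \ref{three+} applied in a $G$-neighborhood of $\Bbb X^G$, which is legitimate because $\Bbb X^G$ is (locally) a retract situation and $\Bbb X\setminus\Bbb X^G$ is open in $\Bbb Y\setminus\Bbb X^G$.

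The key step is then to reduce to the fixed-point-free case. Set $\Bbb Z=\Bbb Y\setminus\Bbb Y^G$, an open $G$-subset of $\Bbb Y$, and $\Bbb W=\Bbb X\setminus\Bbb X^G=\Bbb X\cap\Bbb Z$, which is closed in $\Bbb Z$ and has $\Bbb W^G=\varnothing$. Let $h_0=h\restriction_{\Bbb W}\colon\Bbb W\to\Bbb X'\setminus\Bbb X'^G$, an equigeny with $\op{type}$ in $\cal C$. By hypothesis the diagram $\{\Bbb Z\hookleftarrow\Bbb W\mathop{\rightarrow}\limits^{h_0}\Bbb X'\setminus\Bbb X'^G\}$ is $\cal C$-solved: there are a closed $G$-embedding $\Bbb X'\setminus\Bbb X'^G\hookrightarrow\Bbb Z'$ with $\op{type}(\Bbb Z')\subset\cal C$ and an equigeny $H_0\colon\Bbb Z\to\Bbb Z'$ extending $h_0$. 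Now I would glue: put $\Bbb Y'$ to be the quotient of the disjoint union $\Bbb Z'\sqcup(\Bbb Y^G)'$ — where $(\Bbb Y^G)'$ is obtained as in Lemma \ref{three+} from the fixed part — along the identifications induced by $H_0$ and the fixed-part map over the common ``collar'' $\Bbb Z\cap(\text{nbhd of }\Bbb Y^G)$. Since $\Bbb X'^G$ is a $G$-AE (it is a fixed set, hence carries the trivial action, so it is an ordinary A[N]E situation, or one invokes that the fixed part extends freely), the gluing data is coherent and one obtains $\op{type}(\Bbb Y')\subset\cal C$ (here $(G)\in\cal C$ is used so that the fixed points are legitimately of type in $\cal C$) together with an equigeny $H\colon\Bbb Y\to\Bbb Y'$ extending $h$. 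The embedding $\Bbb X'\hookrightarrow\Bbb Y'$ is closed because it is closed on each of the two pieces and the pieces are closed in $\Bbb Y'$.

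The main obstacle I expect is the compatibility of the two extensions along the boundary between the fixed and the non-fixed regions — i.e. making the equigeny $H_0$ on $\Bbb Z$ and the trivial extension on the fixed collar agree on the overlap so that the quotient $\Bbb Y'$ is Hausdorff (in particular metrizable) and $H$ is genuinely an equigeny (so that $\tilde H$ is a homeomorphism, not merely a continuous bijection). This is exactly the point where one mimics \cite[Lemma 8]{AgRep}: one chooses the $G$-neighborhood of $\Bbb X^G$, resp. $\Bbb Y^G$, inside which $h$, resp. $H$, is forced to be the ``identity on fibers over fixed points'' map of Lemma \ref{three+}, and shows that the two descriptions of $H$ on the overlap literally coincide because on that overlap both are the canonical quotient collapsing $h$-fibers. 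Once Hausdorffness and properness of the induced orbit-space map are checked (using that equigenies are perfect, by \cite[3.7.10]{Eng}), metrizability of $\Bbb Y'$ follows and the proof is complete.
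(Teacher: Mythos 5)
Your overall strategy --- delete the fixed points, apply the hypothesis to the fixed-point-free part, and reassemble --- is the same as the paper's, but the reassembly, which is the entire content of the lemma, does not go through as you describe it. You remove $\Bbb Y^G$ from the ambient space and then try to recover $\Bbb Y'$ as a quotient of the disjoint union $\Bbb Z'\sqcup(\Bbb Y^G)'$ glued ``along the common collar $\Bbb Z\cap(\text{nbhd of }\Bbb Y^G)$''. But $\Bbb Z=\Bbb Y\setminus\Bbb Y^G$ and $\Bbb Y^G$ are disjoint subsets of $\Bbb Y$, so there is no overlap along which to glue, and the identifications you invoke are simply not defined; the hard point (recovering the topology of $\Bbb Y$ near $\Bbb Y^G$) is exactly what is left unresolved. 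The paper sidesteps this entirely by deleting only $\Bbb F=\Bbb X^G$ rather than $\Bbb Y^G$: then $\Bbb Y\setminus\Bbb F$ is a single open $G$-subset of $\Bbb Y$ containing $\Bbb X\setminus\Bbb F$ as a closed $G$-subset with $(\Bbb X\setminus\Bbb F)^G=\varnothing$ (the hypothesis of the reduced theorem constrains only the source $\Bbb X$, so it is irrelevant that $\Bbb Y\setminus\Bbb F$ may still contain fixed points of $\Bbb Y$); the hypothesis yields an equigeny $\eta$ on all of $\Bbb Y\setminus\Bbb F$, and Lemma \ref{three+} applied to the open embedding $\Bbb Y\setminus\Bbb F\hookrightarrow\Bbb Y$ does the topological bookkeeping in one stroke, since the quotient is formed on all of $\Bbb Y$ at once (on $\Bbb F$ the orbits being collapsed are singletons, so nothing is destroyed there and $(G)\in{\cal C}$ keeps the type under control). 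If you wish to keep your decomposition, the repair is the same: do not glue by hand, but apply Lemma \ref{three+} to the open embedding $\Bbb Y\setminus\Bbb Y^G\hookrightarrow\Bbb Y$ and the equigeny $H_0$.

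A second, smaller error: it is not true that an equigeny carries $\Bbb X\setminus\Bbb X^G$ onto $\Bbb X'\setminus(\Bbb X')^G$, nor that ``$(G)$-points can only come from $(G)$-points because $\tilde h$ is a homeomorphism''. An equigeny may collapse a nontrivial orbit to a fixed point (take $G=\mathbb Z/2$ acting by the swap on a two-point space mapping to a one-point space); indeed the equigenies produced by Lemma \ref{three+} do exactly this on $\Bbb Y\setminus\Bbb X$. What is true, and what the paper's restriction uses, is that $h^{-1}(h(\Bbb X^G))=\Bbb X^G$: if $x\notin\Bbb X^G$ had the same image as a fixed point $x_0$, then $\tilde h(G(x))=\tilde h(G(x_0))$ and injectivity of $\tilde h$ would force $G(x)=\{x_0\}$. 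Hence $h$ restricts to an equigeny $\Bbb X\setminus\Bbb X^G\to\Bbb X'\setminus h(\Bbb X^G)$; the target may well contain fixed points, but the reduced form of Theorem \ref{three+++} imposes no condition on $\Bbb X'$, so this costs nothing.
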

\begin{proof}
 Let $\Bbb F\mean\Bbb X^G$. By hypotheses the equigeny
 $\mps{h\restriction}{\x\setminus\Bbb F}{\Bbb X'\setminus\Bbb F}$
 can be extended up to an equigeny
 $\mps{\eta}{\Bbb Y\setminus\Bbb F}{\Bbb Z'}$. Next we
apply  Lemma \ref{three+} to $\eta$ and the open embedding $\Bbb
Y\setminus\Bbb F\hookrightarrow\Bbb Y$.
 \end{proof}

\section{Proof of Theorem  \ref{three+++}}
In view of  Lemma \ref{four+-} we can assume that the $G$-space
$\Bbb X$ has no $G$-fixed  points, $\Bbb X^G=\varnothing$. When
such is the case  it is sufficient by Lemma \ref{three+} for some
$G$-neighborhood $\Bbb Z,\Bbb X\subset\Bbb Z\subset\Bbb Y$, to
construct a $G$-embedding $\Bbb X'\hookrightarrow\Bbb Z'$ with
$\op{type}(\Bbb Z')\subset{\cal C}$ 
and  an equigeny
$\mps{H_1}{\Bbb Z}{\Bbb Z'}$ 
extending $h$.

In what follows the argument  will be carried out by induction on compact Lie group
$G$ based on Palais Metatheorem  \ref{t5.9++}. If $|G|=1$, then
the situation under consideration is trivial. Now we suppose that
for each proper subgroup $K<G$ Theorem \ref{three+++} has been
proved and let us show its validity in  case of the $G$-action.
First we consider a special case:
  \begin{lem}\label{five-1+}  If $\Bbb X'$ admits a
nontrivial slice map  $\mps{\psi}{\Bbb X'}{G/K}$ with $(K)\in{\cal
C}$, then  Theorem \ref{three+++} is valid for the $\cal
C$-admissible diagram  ${\cal D}=\{\Bbb
Y\matop{\hookleftarrow}{}{}\Bbb
X\mathop{\rightarrow}\limits^{h}\Bbb X'\}$.
 \end{lem}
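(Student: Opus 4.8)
The plan is to reduce the diagram to one over the proper subgroup $K<G$ by passing to the $K$-space $(\Bbb X')^K = \psi\ii([K])$, the fiber of the slice map over the base point $[K]\in G/K$, and invoke the inductive hypothesis of Theorem~\ref{three+++} for $K$. First I would use the slice map $\psi$ together with its $G$-extension over a $G$-neighborhood to set up the classical correspondence between $G$-spaces with a (nontrivial) slice map to $G/K$ and $K$-spaces: namely, $\Bbb X'$ is $G$-equimorphic to the twisted product $G\times_K(\Bbb X')^K$, and likewise on the $\Bbb Y$-side after shrinking to a suitable $G$-neighborhood $\Bbb Z\supset\Bbb X$ on which the slice map $h^*\psi$ (or an extension thereof) is still defined; here one uses that slice maps extend over neighborhoods since $G/K\in G\text{-}\ANE$ when $(K)\in{\cal C}\subset\op{Orb}_G$ is an extensor orbit type, or, if $(K)$ is not extensor, one works with the approximate slice from Theorem~\ref{slice2-6} and the structure $(2)$ of Definition~\ref{aslice2+7}.

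Next I would restrict everything over the base point $[K]$: set $\x_K\mean (h^*\psi)\ii([K])\cap\Bbb X$, $\Bbb Z_K\mean (\text{ext }h^*\psi)\ii([K])$, and $\x'_K\mean\psi\ii([K])$. These are $K$-spaces (the isotropy at points over $[K]$ contains $K$-conjugates only, so the acting group on the fiber reduces to $K$), $\x_K$ is closed in $\Bbb Z_K$, and $h$ restricts to an equigeny $h_K\colon\x_K\to\x'_K$ of $K$-spaces. Crucially one checks $\op{type}(\x'_K)\subset{\cal C}\restriction_K$: indeed the $G$-orbit types occurring in $\Bbb X'$ lie in ${\cal C}$ and, over the slice $G/K$, the $K$-isotropy groups of points of $\x'_K$ are exactly the isotropy subgroups of the corresponding points of $\x'$, which are subgroups of $K$ with homogeneous space in ${\cal C}$ — that is, members of ${\cal C}\restriction_K$. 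Since $\x_K^K$ may be nonempty we keep $(K)\in{\cal C}\restriction_K=({\cal C}\restriction_K)'$, so the inductive hypothesis of Theorem~\ref{three+++} for the proper group $K$ applies to the $K$-admissible diagram $\{\Bbb Z_K\hookleftarrow\x_K\xrightarrow{h_K}\x'_K\}$ and yields a closed $K$-embedding $\x'_K\hookrightarrow\Bbb Z'_K$ with $\op{type}(\Bbb Z'_K)\subset{\cal C}\restriction_K$ and an equigeny $H_K\colon\Bbb Z_K\to\Bbb Z'_K$ extending $h_K$.

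Finally I would reassemble the $G$-picture by applying the twisted-product functor $G\times_K(-)$: put $\Bbb Z'\mean G\times_K\Bbb Z'_K$ and define $H\colon\Bbb Z\to\Bbb Z'$ by transporting $G\times_K H_K$ through the equimorphisms $\Bbb Z\cong G\times_K\Bbb Z_K$ and $G\times_K\x'_K\hookrightarrow\Bbb X'$. One checks: $H$ extends $h$ on $\Bbb X$; $H$ is an equigeny (since $G\times_K(-)$ preserves equigenies and the induced map on orbit spaces is $\widetilde{H_K}$, a homeomorphism onto its image, hence $\widetilde H$ is as well); $\Bbb X'\hookrightarrow\Bbb Z'$ is a closed $G$-embedding; and $\op{type}(\Bbb Z')\subset{\cal C}$, because the $G$-orbit types of $G\times_K\Bbb Z'_K$ over $[K]$ are precisely $\{G/L\mid (L)\in\op{type}(\Bbb Z'_K)\}\subset{\cal C}$ by the defining property of ${\cal C}\restriction_K$. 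Then Lemma~\ref{three+} promotes this solution over the $G$-neighborhood $\Bbb Z$ to a solution over all of $\Bbb Y$. The main obstacle I expect is the careful bookkeeping at the two interfaces: first, getting an honest $G$-neighborhood $\Bbb Z$ of $\Bbb X$ in $\Bbb Y$ on which the slice structure is available (when $(K)$ is not an extensor orbit type this requires the approximate slice theorem and a normal subgroup $P\triangleleft G$ with $G/P$ a Lie group, reducing to that quotient); and second, verifying that the orbit-type condition is preserved exactly — i.e. that $\op{type}(\x'_K)\subset{\cal C}\restriction_K$ going down and $\op{type}(G\times_K\Bbb Z'_K)\subset{\cal C}$ coming back up — which is where the interplay between $G$-orbit types, $K$-orbit types, and the operation ${\cal C}\mapsto{\cal C}\restriction_K$ must be handled with care.
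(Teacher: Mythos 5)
Your proposal is correct and follows essentially the same route as the paper: extend the slice map $\psi\circ h$ over a $G$-neighborhood using $G/K\in G$-$\ANE$, restrict to the $K$-spaces over $[K]$, verify $\op{type}(\psi\ii[K])\subset{\cal C}\restriction_K$, apply the inductive hypothesis (Palais Metatheorem) to the proper subgroup $K$, and reassemble via the twisted product $G\times_K(-)$, finishing with Lemma~\ref{three+}. The only superfluous element is your contingency for non-extensor $(K)$ via the approximate slice theorem — since $G$ is a compact Lie group here, every $G/K$ is automatically a $G$-$\ANE$ and that case does not arise.
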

 Before the proof we recall the notion of a twisted product. Let us consider a compact
group $G$, a metric $H$-space $\Bbb S$ where $H<G$ and the
diagonal action of $H$ on the product $G\times\Bbb S$ defined as
$h\cdot (g,y)\mean (g\cdot h^{-1},h\cdot y)$. By $[g,y]$ we denote
the element $H\cdot (g,y)=\{(g\cdot h^{-1},h\cdot y)\ \mid h\in
H\}$ of the orbit space $(G\times\Bbb S)/H$. It turns out that the
formula $g_1\cdot [g,y]=[g_1\cdot g,y]$ where  $g,g_1\in G,y\in
\Bbb S$, correctly define the continuous action of $G$ on the
orbit space $(G\times\Bbb S)/H$ called a {\it twisted product}
(and denoted as $G\times_H\Bbb S$).

The notion of the twisted product arise naturally in studies of a
$G$-space admitting the slice map, say, $\mps{\varphi}{\x}{G/H}$,
$H<G$. Then  $\x$ can be identified with the twisted product
$G\times_H\Bbb S$ where $\Bbb S=\varphi\ii([H])$ is a $H$-slice:
$[g,s]\in G\times_H\Bbb S\leftrightarrow x\in\x$ (see this and
another properties of twisted products in \cite{Br}).
\begin{proof}
 Since  $G/K\in G$-$\ANE$, there exists a $G$-extension
$\mps{\tilde\varphi}{\Bbb U}{G/K}$  of $\mps{\varphi\mean\psi\circ
h}{\Bbb X}{G/K}$ defined on some $G$-neighborhood $\Bbb U, \Bbb
X\subset \Bbb U\subset \Bbb Y$. It is clear that
$\tilde\varphi\ii[K]\supset\varphi\ii[K]$ and $\psi\ii[K]$ are
$K$-spaces for proper compact subgroup $K<G$ with

  \centerline{$\op{type}(\psi\ii[K])\subset{\cal C}\restriction_K\mean\{(H)\mid
H<K \ \text{and }\ G/H\in{\cal C}\}\subset\op{Orb}_K$.}

\noindent Since $(K)\in{\cal C}\restriction_K$ and
$\mps{h\restriction}{\varphi\ii[K]}{\psi\ii[K]}$ is an equigeny,
there exist  by inductive hypothesis an equigeny
$\mps{H'}{\tilde\varphi\ii[K]}{\Bbb W'\supset\psi\ii[K]}$
extending $h\restriction$.

Let us consider the following commutative square diagram:
$$
\begin{array}{ccc}
G\times_K\varphi\ii[K]=\Bbb X& \stackrel{}{\hookrightarrow} & G\times_K\tilde\varphi\ii[K]=\Bbb Y\\
\downarrow \Id\times_K h\restriction=h&& \downarrow H\mean\Id\times_K H'\\
G\times_K\psi\ii[K]=\Bbb X' & \stackrel{}{\hookrightarrow} & \Bbb
Y'\mean G\times_K\Bbb W'.
\end{array}
$$
Since $\mps{H=\Id\times H'}{\Bbb
Y=G\times_K\tilde\varphi\ii[K]}{\Bbb Y'=G\times_K\Bbb W'}$ is an
equigeny, $\op{type}(\Bbb Y')\subset{\cal C}\restriction_K$  and
${\cal C}\restriction_K$ lies naturally into ${\cal C}$, the proof
of Lemma is completed.
\end{proof}
 \medskip We continue the proof of Theorem  \ref{three+++}. The
following result is an easy consequence of the Slice theorem  and
hereditary paracompactness of $\Bbb Y$.
   \begin{lem}\label{ActExt3++}
 There exist a closed $G$-neighborhood  $\Bbb E,\Bbb X\subset \Bbb E\subset \Bbb Y$, and
its local-finite $G$-cover $\sigma\in\cov \Bbb E$ consisting of
closed $G$-subsets $\{\Bbb E_\gamma\subset \Bbb
E\}_{\gamma\in\Gamma}$ such that for each $\gamma\in\Gamma$, $\Bbb
E_\gamma\cap \Bbb X\not=\emptyset$  and the $G$-space $\Bbb
V_\gamma \mean h( \Bbb E_\gamma)$ admits a nontrivial  slice map
$\mps{\alpha}{\Bbb V_{\gamma}}{G/K}$ for some $(K)\in{\cal C}$.
\end{lem}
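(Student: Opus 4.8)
The plan is to read the cover $\sigma$ off the Slice theorem and then to do the point--set bookkeeping downstairs, in the orbit space $Y=\Bbb Y/G$, which is metrizable and hence paracompact. Throughout I use the standing reduction of Lemma~\ref{four+-}, so that $\Bbb X^G=\varnothing$; since the orbits of $\Bbb X$ that $h$ sends to $G$--fixed points of $\Bbb X'$ form a closed invariant subset $h^{-1}((\Bbb X')^G)$ of $\Bbb X$, the same device lets me assume as well that $\Bbb X'=h(\Bbb X)$ has no $G$--fixed point (for the application $h=\Id$ the two assumptions coincide). Granting this, the slice step is immediate: given $x\in\Bbb X$, put $x'=h(x)$; then $G_{x'}$ is a proper closed subgroup of $G$ and $(G_{x'})\in\op{type}(\Bbb X')\subset{\cal C}$, so the Slice theorem supplies an invariant open neighbourhood $\Bbb W_{x'}\ni x'$ in $\Bbb X'$ together with a slice map $\psi_{x'}\colon\Bbb W_{x'}\to G/G_{x'}$, which is nontrivial since $G_{x'}\ne G$. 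Besides its existence, the only property of $\psi_{x'}$ I shall need is that its restriction to any nonempty invariant subspace of $\Bbb W_{x'}$ is again a nontrivial slice map onto $G/G_{x'}$.

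Next I would transport these neighbourhoods up into $\Bbb Y$ and refine. As $h^{-1}(\Bbb W_{x'})$ is open in $\Bbb X$, I may fix an invariant open set $\Bbb O_x\subset\Bbb Y$ with $x\in\Bbb O_x$ and $h(\Bbb O_x\cap\Bbb X)\subset\Bbb W_{x'}$, so that $\{\Bbb O_x\}_{x\in\Bbb X}$ covers $\Bbb X$ by invariant open subsets of $\Bbb Y$. Let $p\colon\Bbb Y\to Y$ be the orbit projection, an open and perfect surjection, and $X=p(\Bbb X)$, which is closed in $Y$. In the metric space $Y$ the family $\{p(\Bbb O_x)\}$ covers $X$, so I refine it to a locally finite open family $\{R_\gamma\}$, discard the indices with $R_\gamma\cap X=\varnothing$ (the survivors still cover an open neighbourhood $O'$ of $X$), choose for each surviving $\gamma$ a point $x_\gamma\in R_\gamma\cap X$ and a closed neighbourhood $D_\gamma\subset R_\gamma$ of $x_\gamma$, take a closed locally finite shrinking $\{F_\gamma^0\}$ of $\{R_\gamma\}$ (normality of $O'$), and put $F_\gamma=F_\gamma^0\cup D_\gamma$. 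Finally, with $E$ a closed neighbourhood of $X$ inside $O'$ and $E_\gamma=F_\gamma\cap E$, I lift everything back by $p^{-1}$: $\Bbb E:=p^{-1}(E)$ and $\Bbb E_\gamma:=p^{-1}(E_\gamma)$. Because $p$ is a closed perfect surjection, $\Bbb E$ is a closed $G$--neighbourhood of $\Bbb X$ in $\Bbb Y$, $\sigma=\{\Bbb E_\gamma\}$ is a locally finite $G$--cover of $\Bbb E$ by closed $G$--sets, $p^{-1}(x_\gamma)\subset\Bbb E_\gamma\cap\Bbb X$ shows each $\Bbb E_\gamma$ meets $\Bbb X$, and $E_\gamma\subset R_\gamma\subset p(\Bbb O_{x(\gamma)})$ forces $\Bbb E_\gamma\subset\Bbb O_{x(\gamma)}$; hence $\Bbb V_\gamma=h(\Bbb E_\gamma)=h(\Bbb E_\gamma\cap\Bbb X)$ (recall $h$ is defined only on $\Bbb X$) is a nonempty invariant subspace of $\Bbb W_{x'(\gamma)}$, and $\psi_{x'(\gamma)}\restriction_{\Bbb V_\gamma}$ is the asserted nontrivial slice map onto $G/G_{x'(\gamma)}$, with $(G_{x'(\gamma)})\in{\cal C}$. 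In particular each $\Bbb E_\gamma$, with its closed invariant subset $\Bbb E_\gamma\cap\Bbb X$, is then in the situation of Lemma~\ref{five-1+}.

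The slice step involves no real content --- every clause sits one step from the Slice theorem --- so the only genuine work, and the single point that needs care, is arranging $\sigma$ to meet all of its requirements at once: $G$--invariance, local finiteness, closedness of the members, covering a \emph{closed} neighbourhood $\Bbb E$ of $\Bbb X$ rather than an open one, and every member meeting $\Bbb X$. The device that disposes of all of these simultaneously is the one sketched above --- refine and shrink downstairs in the metric (hence paracompact and perfectly normal) orbit space, enlarge each shrunk member by a small closed neighbourhood $D_\gamma$ of a chosen point of $X$ so that it still meets $X$, and pull back along $p$ --- since ``locally finite'', ``closed'' and ``neighbourhood of $\Bbb X$'' are each preserved by $p^{-1}$ exactly because $p$ is closed with compact fibres.
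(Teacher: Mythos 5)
Your proof is correct and follows exactly the route the paper indicates for this lemma (which it leaves as ``an easy consequence of the Slice theorem and hereditary paracompactness''): take slices in $\Bbb X'$ around the points $h(x)$, pull the resulting invariant neighborhoods back to invariant open sets in $\Bbb Y$, and manufacture the locally finite closed invariant refinement meeting $\Bbb X$ by doing the bookkeeping in the paracompact metric orbit space and applying $p\ii$. One thing you supply that the paper does not: the standing reduction $\Bbb X^G=\varnothing$ of Lemma \ref{four+-} is not by itself sufficient here, since an equigeny can create $G$-fixed points and a $\Bbb V_\gamma$ meeting $(\Bbb X')^G$ admits no nontrivial slice map; your extension of the Lemma \ref{four+-} device to delete $h\ii((\Bbb X')^G)$ is the correct (and genuinely needed) patch, and it is vacuous in the case $h=\Id$ used for Theorem \ref{one}.
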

 \smallskip Because of our aim -- to extend $h$ up to an equigeny defined
on a $G$-neighborhood $\Bbb Z,\Bbb X\hookrightarrow\Bbb
Z\subset\Bbb Y$, in what follows we can certainly assume that
$\Bbb E=\Bbb Y$.

To have a possibility to argue by a new {\it transfinite }
induction, we well-order the set $\Gamma$ indexing the elements of
family $\{ \Bbb E_\gamma\}$. Without loss of generality we can
assume that $\Gamma$ has the maximal element  $\omega$ which is
not limit. We put $\Bbb Q_\gamma\mean \Bbb X\cup\cup\{ \Bbb
E_{\gamma'}\mid\gamma'<\gamma\}$ for each limit ordinal $\gamma$,
otherwise we set  $ \Bbb Q_\gamma\mean \Bbb X\cup\cup\{ \Bbb
E_{\gamma'}\mid\gamma'\le\gamma\}$.

It is obvious that  $\Bbb Y= \Bbb Q_\omega$ is the body of the
increasing system of closed  subsets $\{ \Bbb Q_\gamma\}$,
moreover $ \Bbb Q_{\gamma'}\cup \Bbb E_{\gamma}= \Bbb Q_\gamma$
for $\gamma=\gamma'+1$. As $\sigma\in\cov \Bbb Y$ is local finite,
the following property of $\Bbb Q_{\gamma}$ holds (see
\cite[Section 2 of Introduction]{Sp}):
  \begin{enumerate}\setcounter{enumi}{0}
    \item
if the ordinal $\gamma$ is limit, then $U\subset \Bbb Q_{\gamma}$
is open if and only if $(\Bbb Q_{\gamma'})\cap U$ is open in $\Bbb
Q_{\gamma'}$ for all $\gamma'<\gamma$ (or equivalently, $\Bbb
Q_{\gamma}$ coincides with the limit
$\matop{\lim}{\rightarrow}{}\{\Bbb Q_{\gamma'}\mid
\gamma'<\gamma\}$ of the direct spectrum).
  \end{enumerate}
 Before proceeding further, we generate some notations.
For each $\gamma\in\Gamma$ we choose a closed neighborhood $\Bbb
P_\gamma,\x\subset\Bbb P_\gamma\subset \Bbb Q_{\gamma}$, such that
 \begin{enumerate}\setcounter{enumi}{1}
 \item  $\Bbb P_{\gamma'}\subset\Bbb P_{\gamma}$ for all $\gamma'<\gamma$;
 \item $\Bbb P_{\gamma+1}\setminus\Bbb
P_{\gamma}\subset\Bbb E_{\gamma}$.
  \end{enumerate}
 In particular, the
closed $G$-neighborhood $\Bbb Z\mean\Bbb P_\omega$ of $\Bbb X$ in
$\Bbb Y$ is the limit $\matop{\lim}{\rightarrow}{}\{\Bbb
P_{\gamma'}\mid \gamma'<\omega\}$ of the direct spectrum. Since of
$(3)$ and local finiteness of $\{\Bbb E_\gamma\}$ we have
 \begin{enumerate}\setcounter{enumi}{3}
 \item  for each limit ordinal $\gamma$, $\Bbb P_{\gamma}$ coincides with
the direct limit $\matop{\lim}{\rightarrow}{}\{\Bbb
P_{\gamma'}\mid \gamma'<\gamma\}$ .
  \end{enumerate}

By transfinite induction  we specify closed neighborhoods $\{\Bbb
P_\gamma\}$ and construct for each $\gamma\in\Gamma$ a closed
$G$-embedding $\Bbb X'\hookrightarrow\Bbb P_{\gamma}'$ and an
equigeny $\mps{H_{\gamma}}{\Bbb P_{\gamma}}{\Bbb P_{\gamma}'}$
extending $h$ such that
\begin{enumerate}
 \item[$(5)_{\gamma}$] $\Bbb P'_{\gamma_1}\subset \Bbb
P'_{\gamma}$ and $H_{\gamma}\restriction_{\Bbb
P_{\gamma_1}}=H_{\gamma_1}$ for all  $\gamma_1<\gamma$.
  \end{enumerate}
We set $\Bbb Z'\mean \Bbb P_\omega'$. It follows by $(5)$ that for
limit ordinal $\gamma$, $\Bbb P_{\gamma}'$ is the limit
$\matop{\lim}{\rightarrow}{}\{\Bbb P_{\gamma_1}'\mid
\gamma_1<\gamma\}$ of the direct spectrum
 (in particular, $\Bbb
Z'=\matop{\lim}{\rightarrow}{}\{\Bbb P_{\gamma}'\mid
\gamma<\omega\}$) and the continuous map
$\mps{H=\matop{\lim}{\rightarrow}{}\{
H_\gamma\}}{\matop{\lim}{\rightarrow}{}\{\Bbb P_{\gamma}\mid
\gamma<\omega\}}{\Bbb Z'=\matop{\lim}{\rightarrow}{}\{\Bbb
P_{\gamma}'\mid \gamma<\omega\}}$ of limits  of the direct spectra
is the equigeny. Since $\Bbb Z\subset\Bbb Y$ is the limit
$\matop{\lim}{\rightarrow}{}\{\Bbb P_{\gamma}\mid \gamma<\omega\}$
of the direct spectrum, $H$ is the required equigeny extending
$h$, that leads to the completion of the {\bf proof of Theorem
\ref{three+++}}.

\medskip Let a topological  space  $(D,\tau_D)$ represent as a union $A\cup
B$ of its subspaces. We consider a {\it weak topology $\tau_w$ on
$D$, generated by  $A$ and $B$}: $U\in\tau_w$ if and only if
$A\cap U\subset A$ and $B\cap U\subset B$ are open. The subspaces
{\it $A$ and $B$ generate the topology of $D$} if the weak
topology $\tau_w$ coincides with  $\tau_D$. It is known \cite{Sp}
that
 \begin{enumerate}
  \item[$(6)$] the subspaces $A$ and $B$ generate the topology of $D$ in the case
that $A$ and $B$ are closed in $D$.
  \end{enumerate}

 The {\bf base} of the inductive argument is easily established
with the help of Lemma \ref{five-1+}. Let $\gamma_0\in\Gamma$ be a
minimal ordinal.
 \begin{lem}\label{five+} There exist a closed
$G$-embedding $\Bbb X'\hookrightarrow\Bbb P_{\gamma_0}'$ and an
equigeny $\mps{H_{\gamma_0}}{\Bbb P_{\gamma_0}}{\Bbb
P_{\gamma_0}'}$ extending $h$.
 \end{lem}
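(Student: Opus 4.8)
The goal is the base case of the transfinite induction: with $\gamma_0$ the minimal index of the cover $\sigma$, we must produce a closed $G$-embedding $\Bbb X'\hookrightarrow\Bbb P_{\gamma_0}'$ and an equigeny $H_{\gamma_0}\colon\Bbb P_{\gamma_0}\to\Bbb P_{\gamma_0}'$ extending $h$, where $\Bbb P_{\gamma_0}$ is a suitably chosen closed $G$-neighborhood of $\Bbb X$ inside $\Bbb Q_{\gamma_0}=\Bbb X\cup\Bbb E_{\gamma_0}$. The plan is to reduce this to Lemma~\ref{five-1+}. The point of the cover $\sigma$ (Lemma~\ref{ActExt3++}) is precisely that the image $\Bbb V_{\gamma_0}=h(\Bbb E_{\gamma_0})$ admits a nontrivial slice map $\alpha\colon\Bbb V_{\gamma_0}\to G/K$ for some $(K)\in{\cal C}$. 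So on the piece $\Bbb E_{\gamma_0}$ we are exactly in the situation of Lemma~\ref{five-1+}, applied to the $\cal C$-admissible diagram $\{\Bbb E_{\gamma_0}\hookleftarrow\Bbb E_{\gamma_0}\cap\Bbb X\xrightarrow{h\restriction}\Bbb V_{\gamma_0}\}$.

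\textbf{Key steps.} First I would set $\Bbb E=\Bbb E_{\gamma_0}$, $\Bbb X_0=\Bbb E\cap\Bbb X$ (a closed $G$-subset of $\Bbb E$), and $\Bbb X_0'=h(\Bbb X_0)=\Bbb V_{\gamma_0}$; note $\op{type}(\Bbb X_0')\subset{\cal C}$ and $h\restriction\colon\Bbb X_0\to\Bbb X_0'$ is an equigeny. Since $\Bbb X_0'$ carries the nontrivial slice map $\alpha$, Lemma~\ref{five-1+} yields a closed $G$-embedding $\Bbb X_0'\hookrightarrow\Bbb E'$ with $\op{type}(\Bbb E')\subset{\cal C}$ and an equigeny $\eta\colon\Bbb E\to\Bbb E'$ extending $h\restriction$; by the reduction explained in Section~5 (Lemma~\ref{three+}) we may in fact take $\eta$ defined only on a closed $G$-neighborhood of $\Bbb X_0$ in $\Bbb E$, which is what condition~$(3)$ on $\Bbb P_{\gamma_0}$ will accommodate. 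Next I would glue: choose the closed $G$-neighborhood $\Bbb P_{\gamma_0}$ of $\Bbb X$ in $\Bbb Q_{\gamma_0}$ so that $\Bbb P_{\gamma_0}\setminus\Bbb X\subset\Bbb E_{\gamma_0}$ (possible because $\Bbb X$ is closed and $\Bbb X\subset\Bbb X\cup\Int\Bbb E_{\gamma_0}$ is a neighborhood there; shrink $\eta$ accordingly). Then $\Bbb P_{\gamma_0}=\Bbb X\cup(\Bbb P_{\gamma_0}\cap\Bbb E_{\gamma_0})$ is a union of two closed $G$-subsets, and I define $\Bbb P_{\gamma_0}'$ to be the pushout (adjunction $G$-space) of $\Bbb X'\hookleftarrow\Bbb X_0'\hookrightarrow\eta(\Bbb P_{\gamma_0}\cap\Bbb E_{\gamma_0})$, and $H_{\gamma_0}$ to be the map induced by $h$ on $\Bbb X$ and $\eta$ on $\Bbb P_{\gamma_0}\cap\Bbb E_{\gamma_0}$, which agree on the overlap $\Bbb X_0$. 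By $(6)$ (closed subspaces generate the topology), $H_{\gamma_0}$ is continuous; it is equivariant; and since both $h$ and $\eta$ are equigenies whose induced orbit maps are homeomorphisms, the induced map $\tilde H_{\gamma_0}$ on orbit spaces is a homeomorphism onto its image, so $H_{\gamma_0}$ is an equigeny. Finally $\Bbb X'\hookrightarrow\Bbb P_{\gamma_0}'$ is closed since $\Bbb X'$ is one of the two closed pieces of the adjunction, $\op{type}(\Bbb P_{\gamma_0}')\subset{\cal C}$ (both pieces have orbit types in ${\cal C}$), and $H_{\gamma_0}$ extends $h$ by construction.

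\textbf{Main obstacle.} The delicate point is not the existence of $\eta$ on $\Bbb E_{\gamma_0}$ — that is delivered verbatim by Lemma~\ref{five-1+} — but the \emph{compatibility of the gluing with the topology and with being an equigeny}. Concretely: one must choose $\Bbb P_{\gamma_0}$ (equivalently, restrict $\eta$) so that $\eta$ is the identity, in the appropriate sense, on the part of $\Bbb X$ that lies in $\Bbb E_{\gamma_0}$, i.e. so that $\eta\restriction_{\Bbb X_0}=h\restriction_{\Bbb X_0}$ with the \emph{same} target copy $\Bbb X_0'\subset\Bbb X'$; this is why we apply Lemma~\ref{five-1+} to the diagram whose codomain is already $\Bbb X_0'=h(\Bbb X_0)$ rather than an abstract model. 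The other subtlety is verifying that $\tilde H_{\gamma_0}$ is injective on the orbit space: a priori a point of $\Bbb X\setminus\Bbb E_{\gamma_0}$ and a point of $\Bbb E_{\gamma_0}\setminus\Bbb X$ could be sent to the same orbit; this is ruled out because the pushout identifies orbit spaces along $\Bbb X_0$ only, and $\tilde h$, $\tilde\eta$ are embeddings, so $\tilde H_{\gamma_0}$ is an embedding of the pushout of orbit spaces, hence a homeomorphism onto $\Bbb P_{\gamma_0}'/G$. Once these two bookkeeping points are handled, conditions $(5)_{\gamma_0}$ (vacuous, as $\gamma_0$ is minimal) and the stated conclusion follow immediately.
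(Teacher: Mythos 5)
Your proposal follows the paper's own argument: apply Lemma~\ref{five-1+} to the piece $\Bbb E_{\gamma_0}$, whose image $\Bbb V_{\gamma_0}=h(\Bbb E_{\gamma_0}\cap\Bbb X)$ carries the nontrivial slice map, to obtain the equigeny $\eta$, and then glue $h$ and $\eta$ over $\Bbb V_{\gamma_0}$ using the weak (adjunction) topology, exactly as in the paper's construction of $\Bbb P_{\gamma_0}'=\Bbb X'\cup_{\Bbb V_{\gamma_0}}\Bbb E_{\gamma_0}'$. Your additional bookkeeping (closedness of the two pieces, continuity via $(6)$, injectivity of the induced map of orbit spaces) only makes explicit what the paper leaves implicit, so the approach is essentially the same.
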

\begin{proof} Since  the $G$-space  $\Bbb V_{\gamma_0}=h(\Bbb E_\gamma\cap\x)$ admits  a nontrivial
slice map, Lemma \ref{five-1+} implies the existence of a closed
$G$-embedding $\Bbb V\hookrightarrow\Bbb E_{\gamma_0}'$ and an
equigeny $\mps{\eta}{\Bbb E_{\gamma_0}}{\Bbb E_{\gamma_0}'}$
extending $h$.

Let $\Bbb P_{\gamma_0}\mean\Bbb X\cup\Bbb E_{\gamma_0}\subset\Bbb
Y$. Since $\Bbb X$ and $\Bbb E_{\gamma_0}$ are simultaneously
closed in $\Bbb P_{\gamma_0}$, the topology of $\Bbb P_{\gamma_0}$
coincides with a weak topology generated by $\Bbb X$ and $\Bbb
E_{\gamma_0}$. Let us consider a union $\Bbb X'\cup_{\Bbb
V_{\gamma_0}}\Bbb E_{\gamma_0}'$ with a weak topology which we
denote by $\Bbb P_{\gamma_0}'$. It is now well understood that the
$G$-map $\mps{H_{\gamma_0}\mean\Id\cup\eta}{\Bbb P_{\gamma_0}=\Bbb
X\cup\Bbb E_{\gamma_0}}{\Bbb P_{\gamma_0}'=\Bbb X'\cup\Bbb
E_{\gamma_0}}$ defined as $\Id$ on $\Bbb X$ and $\eta$ on $\Bbb
E_{\gamma_0}$ is required.
\end{proof}

 \medskip The {\bf inductive step} consists of the following
 proposition. Let $\gamma\in\Gamma$ be successive for
 $\gamma_1\in\Gamma$, i.e. $\gamma=\gamma_1+1$.
 \begin{lem}\label{five-+1}
  There exist a closed $G$-embedding $\Bbb X'\hookrightarrow\Bbb
P_{\gamma}'$ and an equigeny $\mps{H_{\gamma}}{\Bbb
P_{\gamma}}{\Bbb P_{\gamma}'}$ extending $h$ such that the
conditions $(5)_{\gamma}$ holds.
   \end{lem}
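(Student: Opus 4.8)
The plan is to run the inductive step as a gluing argument, parallel to the base case (Lemma~\ref{five+}) but now gluing onto the previously constructed object $\Bbb P_{\gamma_1}'$ instead of onto $\Bbb X'$. Since $\gamma=\gamma_1+1$, by the transfinite induction hypothesis we already have a closed $G$-embedding $\Bbb X'\hookrightarrow\Bbb P_{\gamma_1}'$ and an equigeny $\mps{H_{\gamma_1}}{\Bbb P_{\gamma_1}}{\Bbb P_{\gamma_1}'}$ extending $h$. The cell to be adjoined is $\Bbb E_{\gamma_1}$, whose image $\Bbb V_{\gamma_1}=h(\Bbb E_{\gamma_1})$ admits a nontrivial slice map $\mps{\alpha}{\Bbb V_{\gamma_1}}{G/K}$ with $(K)\in{\cal C}$ by Lemma~\ref{ActExt3++}. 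Applying Lemma~\ref{five-1+} to the $\cal C$-admissible diagram $\{\Bbb E_{\gamma_1}\hookleftarrow(\Bbb E_{\gamma_1}\cap\Bbb X)\mathop{\rightarrow}\limits^{h}\Bbb V_{\gamma_1}\}$ yields a closed $G$-embedding $\Bbb V_{\gamma_1}\hookrightarrow\Bbb E_{\gamma_1}'$ with $\op{type}(\Bbb E_{\gamma_1}')\subset{\cal C}$ and an equigeny $\mps{\eta}{\Bbb E_{\gamma_1}}{\Bbb E_{\gamma_1}'}$ extending $h\restriction$.

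Next I would choose the closed neighborhood $\Bbb P_\gamma$ satisfying $(1)$--$(3)$ so that the overlap is controlled: concretely, $\Bbb P_{\gamma_1}\subset\Bbb P_\gamma\subset\Bbb Q_\gamma=\Bbb Q_{\gamma_1}\cup\Bbb E_{\gamma_1}$ with $\Bbb P_\gamma\setminus\Bbb P_{\gamma_1}\subset\Bbb E_{\gamma_1}$. Then $\Bbb P_\gamma=\Bbb P_{\gamma_1}\cup(\Bbb P_\gamma\cap\Bbb E_{\gamma_1})$ is a union of two closed subsets, so by $(6)$ it carries the weak topology they generate, and the overlap $\Bbb P_{\gamma_1}\cap\Bbb E_{\gamma_1}$ is a closed $G$-subset of $\Bbb E_{\gamma_1}$. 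I then define $\Bbb P_\gamma'$ to be the adjunction space $\Bbb P_{\gamma_1}'\cup_{\varphi}(\Bbb P_\gamma\cap\Bbb E_{\gamma_1})'$, where the gluing is along $\eta\restriction$ on the overlap and we take the weak topology; equivalently, $\Bbb P_\gamma'=\Bbb P_{\gamma_1}'\cup\eta(\Bbb P_\gamma\cap\Bbb E_{\gamma_1})$ with the weak topology from these two closed pieces. The map $H_\gamma\mean H_{\gamma_1}\cup\eta$, defined as $H_{\gamma_1}$ on $\Bbb P_{\gamma_1}$ and $\eta$ on $\Bbb P_\gamma\cap\Bbb E_{\gamma_1}$, is well-defined on the overlap precisely because both $H_{\gamma_1}$ and $\eta$ extend $h$, hence agree on $\Bbb P_{\gamma_1}\cap\Bbb E_{\gamma_1}\subset\Bbb X$ — wait, more carefully: they agree on $(\Bbb P_{\gamma_1}\cap\Bbb E_{\gamma_1})\cap\Bbb X$, and one must check agreement on all of the overlap; this is where the choice $\Bbb P_\gamma\setminus\Bbb P_{\gamma_1}\subset\Bbb E_{\gamma_1}$ and a careful bookkeeping of what $H_{\gamma_1}$ does near $\Bbb E_{\gamma_1}$ will be needed.

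The main obstacle I anticipate is exactly this compatibility on the overlap together with verifying that $H_\gamma$ is an equigeny and that $\Bbb X'\hookrightarrow\Bbb P_\gamma'$ remains a \emph{closed} embedding with $\op{type}(\Bbb P_\gamma')\subset{\cal C}$. For the equigeny property: $H_\gamma$ is continuous by the weak-topology universal property $(6)$, it is equivariant as a union of equivariant maps, and its induced map on orbit spaces is the corresponding gluing of the homeomorphisms $\tilde H_{\gamma_1}$ and $\tilde\eta$, hence a homeomorphism; perfectness then follows from \cite[3.7.10]{Eng} as noted in the preliminaries. For the orbit-type condition, $\op{type}(\Bbb P_\gamma')\subset\op{type}(\Bbb P_{\gamma_1}')\cup\op{type}(\Bbb E_{\gamma_1}')\subset{\cal C}$. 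The closedness of $\Bbb X'\hookrightarrow\Bbb P_\gamma'$ follows since it is closed into $\Bbb P_{\gamma_1}'$ and $\Bbb P_{\gamma_1}'$ is closed in $\Bbb P_\gamma'$ (the latter by the weak-topology description, as $\Bbb P_{\gamma_1}$ is closed in $\Bbb P_\gamma$ and $H_\gamma$ is an equigeny hence perfect hence closed). Finally $(5)_\gamma$ holds by construction: $\Bbb P_{\gamma_1}'\subset\Bbb P_\gamma'$ and $H_\gamma\restriction_{\Bbb P_{\gamma_1}}=H_{\gamma_1}$, and for $\gamma_1'<\gamma_1$ the relations are inherited from $(5)_{\gamma_1}$. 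The one genuinely delicate point — worth isolating as a sublemma — is that after possibly shrinking $\Bbb P_\gamma$ we may arrange $\Bbb P_{\gamma_1}\cap\Bbb E_{\gamma_1}$ to lie in a region where $H_{\gamma_1}$ is already determined by $\eta$ (e.g. by taking $\Bbb P_\gamma$ to agree with $\Bbb P_{\gamma_1}$ outside a small collar of $\Bbb E_{\gamma_1}$), so that the two partial maps glue consistently.
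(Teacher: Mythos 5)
Your gluing step has a genuine gap, and you have in fact put your finger on it yourself. The equigeny $\eta$ you obtain from Lemma~\ref{five-1+} applied to the diagram $\{\Bbb E_{\gamma}\hookleftarrow(\Bbb E_{\gamma}\cap\Bbb X)\mathop{\rightarrow}\limits^{h}\Bbb V_{\gamma}\}$ extends only $h\restriction_{\Bbb E_{\gamma}\cap\Bbb X}$, whereas $H_{\gamma_1}$ is already defined on all of $\Bbb P_{\gamma_1}$, in particular on the part of the overlap $\Bbb P_{\gamma_1}\cap\Bbb E_{\gamma}$ lying outside $\Bbb X$. Since $\eta$ and $H_{\gamma_1}$ are constructed independently of one another away from $\Bbb X$, there is no reason for them to agree there, and the union $H_{\gamma_1}\cup\eta$ is simply not well defined. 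Your proposed repair --- shrinking $\Bbb P_{\gamma}$ so that the overlap lies ``in a region where $H_{\gamma_1}$ is already determined by $\eta$'' --- cannot work as stated: $H_{\gamma_1}$ predates $\eta$ and is determined by it nowhere, and the overlap cannot be squeezed into $\Bbb X$, because $\Bbb P_{\gamma_1}$ is a closed \emph{neighborhood} of $\Bbb X$ and hence meets $\Bbb E_{\gamma}$ in a set strictly larger than $\Bbb E_{\gamma}\cap\Bbb X$ whenever $\Bbb E_{\gamma}$ accumulates on $\Bbb X$.

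The paper resolves this by applying Lemma~\ref{five-1+} to a different admissible diagram, one whose ``already solved'' part is not $\Bbb E_{\gamma}\cap\Bbb X$ but a whole closed neighborhood $\Bbb U$ of $\Bbb E_{\gamma}\cap\Bbb X$ inside $\Bbb P_{\gamma_1}$, carrying the equigeny $H_{\gamma_1}\restriction_{\Bbb U}$ onto $\Bbb U'=H_{\gamma_1}(\Bbb U)$. To make Lemma~\ref{five-1+} applicable in this setting one must first extend the slice map $\varphi$ from $\Bbb V_{\gamma}$ to a slice map $\mps{\psi}{\Bbb U'}{G/K}$ on a closed neighborhood $\Bbb U'\subset\Bbb P'_{\gamma_1}$ of $\Bbb V_{\gamma}$ (this uses the Slice theorem, $G/K\in G$-$\ANE$), then set $\Bbb U\mean H_{\gamma_1}\ii(\Bbb U')$ and choose a closed neighborhood $\hat{\Bbb U}\subset\Bbb P_{\gamma_1}\cup\Bbb E_{\gamma}$ of $\Bbb E_{\gamma}\cap\Bbb X$ with $\hat{\Bbb U}\cap\Bbb P_{\gamma_1}=\Bbb U$. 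Applying Lemma~\ref{five-1+} to $\{\hat{\Bbb U}\hookleftarrow\Bbb U\mathop{\rightarrow}\limits^{H_{\gamma_1}\restriction}\Bbb U'\}$ then yields an equigeny $\mps{\eta}{\hat{\Bbb U}}{\hat{\Bbb U}'}$ extending $H_{\gamma_1}\restriction_{\Bbb U}$, so agreement on the overlap is automatic and the weak-topology gluing you describe goes through verbatim. This preliminary slice-extension step is the missing idea; the remainder of your argument (the weak topology, continuity via $(6)$, the orbit-type bookkeeping, closedness of $\Bbb X'$, and the verification of $(5)_{\gamma}$) matches the paper.
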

  \begin{proof}
 By Lemma \ref{ActExt3++} $\Bbb V_{\gamma}$ admits  a nontrivial
slice map $\mps{\varphi}{\Bbb V_{\gamma}}{G/K},(K)\in{\cal C}$.
The Slice theorem ($\equiv$ $G/K\in G$-$\ANE$) implies that
 \begin{enumerate}\setcounter{enumi}{6}
 \item the partial $G$-map $\pam{\Bbb
P_{\gamma_1}'}{\Bbb V_{\gamma}}{\varphi}{G/K}$ can be extended up
to a slice map $\mps{\psi}{\Bbb U'}{G/K}$ defined into a closed
neighborhood $\Bbb U',  \Bbb V_{\gamma}\subset \Bbb U'\subset \Bbb
P_{\gamma_1}'$.
\end{enumerate}

 Let $\mps{\varphi\mean\psi\circ h_{\gamma_1}}{\Bbb
U}{G/K}$ be a slice map defined into $\Bbb U\mean
H_{\gamma_1}\ii(\Bbb U')$, $\hat {\Bbb U}\subset \Bbb
P_{\gamma_1}\cup \Bbb E_{\gamma}$ a closed neighborhood of $\Bbb
E_{\gamma}\cap\x$ such that $\hat {\Bbb U}\cap\Bbb
P_{\gamma_1}=\Bbb U$.

By Lemma \ref{five-1+} there  exist a closed $G$-embedding ${\Bbb
U}'\hookrightarrow\hat {\Bbb U}'$ and an equigeny
$\mps{\eta}{\hat {\Bbb U}}{\hat {\Bbb U}'}$ extending
$H_{\gamma_1}$.

 It is easy to check that the desired $G$-space $\Bbb
P'_{\gamma}$ is  the union  $\Bbb P'_{\gamma_1}\cup_{\Bbb U'} \hat
{\Bbb U}'$ endowed with a weak topology. It is evidently that for
$\Bbb P_{\gamma}\mean\Bbb P'_{\gamma_1}\cup \hat {\Bbb U}$ we have
$\Bbb P_{\gamma}\setminus\Bbb P_{\gamma_1}\subset\Bbb E_{\gamma}$.

The desired $G$-map $\mps{H_{\gamma}}{\Bbb P_{\gamma}=\Bbb
P'_{\gamma_1}\cup \hat {\Bbb U}}{ \Bbb P'_{\gamma}=\Bbb
P'_{\gamma_1}\cup _{\Bbb U'}\hat {\Bbb U}'}$ coincides with
$H_{\gamma_1}$ on $\Bbb P_{\gamma_1}$ and coincides with $\eta$ on
$\hat {\Bbb U}$. Since the topology of $\Bbb P_{\gamma}$ is
generated by $\Bbb P'_{\gamma_1}$ and $\hat {\Bbb U}$, $H_\gamma$
is the required equigeny  extending $H_{\gamma_1}$.
 \end{proof}

\bigskip Now let $\gamma\in\Gamma$ be a limit ordinal.
We consider the increasing family of constructed $G$-subspaces
$\{\Bbb P'_{\gamma'}\subset\Bbb
P'_{\gamma''}\}_{\gamma'\le\gamma''<\gamma}$ and the family of
equigenies $\{\mps{H_{\gamma'}}{\Bbb P_{\gamma'}}{ \Bbb
P'_{\gamma'}}\}_{\gamma'<\gamma} $. Furthermore, we set $\Bbb
P'_\gamma$ taken as $\matop{\lim}{\rightarrow}{}\{\Bbb
P'_{\gamma'}\mid\gamma'<\gamma\}$, and $\mps{H_\gamma }{\Bbb
P_\gamma}{ \Bbb P'_{\gamma}}$ taken as $H_{\gamma'} $ on $\Bbb
P_{\gamma'}$ for all  $\gamma'<\gamma$. In view of  $(3)$ and
local finiteness of $\{\Bbb E_{\gamma}\}$, $\Bbb
P_\gamma=\matop{\lim}{\rightarrow}{}\{\Bbb
P_{\gamma'}\mid\gamma<\gamma\}$ is a closed neighborhood of $\x$
in $\Bbb Q_{\gamma}$ and, therefore, $\mps{H_\gamma }{\Bbb
P_\gamma}{ \Bbb P'_{\gamma}}$ is an continuous equigeny. Since
$\Bbb X\hookrightarrow\Bbb P'_\gamma$ is a closed embedding, $\Bbb
P_{\gamma'}\subset\Bbb P_\gamma$ and
$H_{\gamma'}=H_{\gamma}\restriction_{\Bbb P_{\gamma'}}$ for all
$\gamma'<\gamma$, the proof of Theorem \ref{three+++} is
completed.
\medskip

\section{ Some remarks on Murayama's paper  "On  G-ANRs and their G-homotopy
 types"}
 The paper of Murayama \cite{Mu} contains the basis of the theory of equivariant absolute
extensors and retracts for spaces  with action of compact Lie
group. As an application of the developed methods he has proved a
series of important results (which remain of interest up to date):
\begin{enumerate}\setcounter{enumi}{0}
  \item [$(a)$] Each convex $G$-subset $C$ of a locally convex topological linear space $L$ is an equivariant
absolute extensor; and
 \item [$(b)$] Each equivariant absolute neighborhood extensor has
the $G$-homotopy type  of a $G$-$CW$-complex.
 \end{enumerate}
Since the reasoning behind the Murayama's proof was not based on
assumption of $L$ being a $G$-space, we formulate Theorem (a) more
generally than in \cite[Theorem 5.3]{Mu} permitting $L$ to be
merely a locally convex topological linear space without any
action away from $C$.

 As a several of original papers, also this one contains some
mistakes (which, however, do not affect the main results):
 \begin{enumerate}\setcounter{enumi}{0}
 \item [$(c)$] We have pointed out that \cite[Proposition 8.7(2)]{Mu} was proved only
for an Abelian Lie group.
 \item [$(d)$] The proof of the necessity
of \cite[Proposition 8.5]{Mu} is also performed for an Abelian
Lie group. In the general case one should slightly improve it:
take sufficiently small neighborhood $U$ invariant with respect to
both left regular and right regular actions of group $H$ on $G$
(the existence of which is established straightforwardly); then
take a retraction $r$ equivariant with respect to both right
regular action of $H$ on $U$ and left regular action of $H$ on
$U$, and further proceed according to the Murayama's original
proof.
 \item [$(e)$] The main (but easily avoided) deficiency is \cite[Proposition  5.1]{Mu},
asserting  that the Banach space  $B(X)$ of all bounded functions
on the $G$-space $X$ is a $G$-space  (this is true only for
compact  $X$).
 \end{enumerate}
 However Murayama is not the first who made the latter
error -- see, for example, Jaworowskii \cite{Jw}. In fact, proving
an equivariant analog of Wojdyslawskii's theorem, both Murayma and
Jaworowskii have used only the existence of an equivariant
embedding of $X$ into the convex hull $C$ of $X$ which naturally
lies in $B(X)$ (see \cite[Theorem 6.2]{Mu} and \cite[Proposition
4.1]{Jw}). Though the action of $G$ on $B(X)$ is discontinuous,
it can however be easily checked that the restriction of this
action onto the convex hull $C$ is continuous. 

In view of Theorem
(a) it completely rehabilitates \cite[Theorem 6.2]{Mu} and, in a
literal sense, the deficiency (c) does not affect the validity of
all facts proved later on. Without any doubt, Murayama can be
considered to have proved the equivariant Wojdyslawskii's theorem.
In view of this observation, other places in \cite{Mu} --
Proposition 8.1, Theorem 6.4, Proposition 10.1 (on
$G$-domination), Corollary 10.2, Theorem 11.1 and so on, can be
considered as completely proved.

 %\medskip 
 Recently some authors have raised some doubt
 concerning the substantiation of the
results of the Murayama's paper (see \cite{An},\cite{AE} and
certain
other papers by
the same authors). As to the deficiency (e), we have
already
explained that it does not affect the main results.

 There has also been
 some concern (see \cite{An}) regarding the
continuity both of the action on the $G$-nerve $K(S)$ and of the
map $P$ into this $G$-nerve in \cite[Proposition 2.4]{Mu}.
Therefore one can raise some doubt  about the validity of the main
results -- Theorem (a) and Theorem (b).

To dispel with the first doubt one should consider the theory of
simplicial sets where proofs of similar facts are straightforward exercises.
The second doubt (the continuity of $P$) is also groundless: the
proof of \cite[Proposition 2.4]{Mu} was executed flawlessly and it is
based on results of Segal on classifying spaces \cite{Sg}. Only at
the end of this proof there is a misprint -- one should rearrange
the order of maps in the composite. Incidentally, in \cite{AE}
this proof added by Segal's results (to which Murayama only
made a reference) was reproduced word for word. Therefore, in our
opinion, Theorems (a) and (b) without any doubt belong to
Murayama.

\section*{Acknowledgements}
The first author was supported by a grant from
 the Ministry of Education of the Republic of Belarus.
 The second author were supported by a grant from
 the Slovenian Research Agency.
 We thank the referee for comments and suggestions.

\end{document}